\documentclass[12pt,a4paper,twoside,final,notitlepage, reqno]{article}

\usepackage[english]{babel}
\usepackage[latin1]{inputenc}
\usepackage[a4paper,left=2.5cm,right=2.5cm]{geometry}
\usepackage{amssymb}
\usepackage{amsthm} 
\usepackage{amsmath}
\usepackage{graphics,graphicx}
\usepackage[usenames, dvipsnames]{color}
\usepackage{hyperref}

\usepackage{booktabs}
\usepackage{subfigure} 
\usepackage{siunitx}

\setlength{\textheight}{22cm}  
\setlength{\footskip}{2cm}
\setlength{\headheight}{20pt}


\graphicspath{{figs/}}

\makeatletter
\def\input@path{{figs/}}
\makeatother


\theoremstyle{definition}

\newtheorem{proposition}{Proposition}

\theoremstyle{definition}
\newtheorem{definition}{Definition}

\theoremstyle{definition}
\newtheorem{remark}{Remark}


%
\providecommand{\keywords}[1]  {\textbf{Keywords:} #1}

%




%
%
%
%

\def\Tau{{\boldsymbol{\mathfrak M}}}
\def\ind{I}
\def\proj{p}
\renewcommand \ker[1] { \mathop{\rm Ker}\nolimits \left(#1\right) }
\newcommand \ran[1] { \mathop{\rm Ran}\nolimits\left(#1\right) }
\renewcommand{\div}{\mathop{\rm div}\nolimits}
\def\Delt{{{\scriptstyle \Delta} t}}
\newcommand \tsp[1] { \,^{T}#1 }
\newcommand \psinv[1] { #1^{ \widetilde{-1} } }
\newcommand{\rr}{\mathbb{R}}
\newcommand{\n}{\mathbf{n}}
\newcommand{\oo}{\Omega}        
\newcommand{\HH}{H}             
\newcommand{\TT}{T}             
\newcommand{\am}{\chi}
\newcommand{\cm}{c}
\newcommand{\vm}{v}
\newcommand{\VM}{V}
\newcommand{\uu}{u}
\newcommand{\UU}{U}
\newcommand{\ion}{I_{ion}}
\newcommand{\iapp}{I_{st}}
\newcommand{\ww}{\mathbf{w}}
\newcommand{\WW}{W}
\newcommand{\G}{\sigma}              
\newcommand{\Ge}{\sigma_e}           
\newcommand{\Gi}{\sigma_i}
\newcommand{\Gt}{\sigma_\TT}           
\newcommand{\Gun}{\overline{\sigma}_1}
\newcommand{\Gv}{\overline{\sigma}_e}

\newcommand{\mass}{M}
\newcommand{\stiff}{S}
\newcommand{\rst}{\Pi}
\renewcommand{\prec}{P}
\newcommand{\sysmat}{\Lambda}

\usepackage{authblk}

\title{
  \vspace{-1.5cm}
  \bf{\Large{
      Preconditioning the bidomain model 
      \\[5pt]
      with almost linear complexity
    }}
}

\author[1]{Charles Pierre \thanks{charles.pierre@univ-pau.fr}}

\affil[1]{
  Laboratoire  de Math\'ematiques et de leurs Applications,
  UMR CNRS 5142, \protect \\
  Universit\'e de Pau et des Pays de l'Adour, France.
}

\usepackage{fancyhdr}

\fancyhf{}
\pagestyle{fancy}  

\fancyhead[EL]{Charles Pierre} 

\fancyhead[OR]{Preconditioning the bidomain model 
} 

\fancyfoot[C]{\oldstylenums{\thepage}}

\begin{document}
\date{30 September, 2011}
\maketitle
\begin{abstract}
  The bidomain model is widely used in electro-cardiology to simulate  spreading of excitation in the myocardium and electrocardiograms.
  It consists of a system of two parabolic reaction diffusion equations coupled with an ODE system. Its discretisation displays an ill-conditioned system matrix to be inverted at each time step: simulations based on the bidomain model therefore are associated with high computational costs. In this paper we propose a preconditioning for the bidomain model in an extended framework including a coupling with the surrounding tissues (the torso).
The preconditioning is based on a formulation of the discrete problem that is shown to be symmetric positive
semi-definite. 
A block $LU$ decomposition of the system together with a heuristic approximation (referred to as the monodomain approximation) are the key ingredients for the preconditioning definition.
Numerical results are provided for two test cases:
a 2D test case on a realistic slice of the thorax based on a segmented heart medical image geometry, a 3D test case involving a small cubic slab of tissue with orthotropic anisotropy.
The analysis of the resulting computational cost (both in terms of CPU time and of iteration number) shows an almost linear complexity with the problem size, i.e. of type $n\log^\alpha(n)$ (for some constant $\alpha$) which  is optimal complexity for such problems.
\end{abstract}
\vspace{20pt}
\noindent
\keywords{ 
  preconditioning, electro-cardiology, hierarchical matrices, reaction diffusion equations
}
\vspace{2cm}
\section{Introduction}
\label{sec:intro}
The bidomain model \cite{tung-bid,krassowska-neu-93,ambr-colli-sav-00,colli-savare-02,veneroni-06,pullan-book-2005,clayton-2010} is up to now the most physiologically founded model to describe the heart electrical activity. 
The bidomain model is here considered in an extended version referred to as the \textit{coupled heart and torso bidomain model}. It includes a coupling of the cardiac electrical activity with the surrounding tissue  electrical activity, allowing in particular electrocardiogram simulations.
\\
The bidomain model mathematical formulation is composed of a system of two PDEs (parabolic reaction diffusion equations) describing the evolution of two potentials: the \textit{intra-} and \textit{extra-cellular potentials} within the myocardium. This system is coupled with a set of ODEs modelling the kinetic of ionic transfer across the cellular membrane.
\\
The discretisation of the bidomain model displays an ill conditioned system matrix to be inverted at each time step. 
This is essentially due to the nature of the model.  
Two reasons are raised for this. 
The bidomain model can be formulated as a degenerate system of two coupled parabolic equations \cite{colli-savare-02}, which degeneracy causes ill-conditioning. 
Another formulation of the bidomain model, made up of a single scalar semi-linear parabolic equation, is studied in \cite{2yves-pierre-jnla07}. This formulation involves a non-local  operator of second order in space, referred to as the \textit{bidomain operator}.
The bidomain operator is defined as the harmonic mean between two elliptic operators.
The non-locality of the bidomain operator generates high computational costs. 
\\
On top of this structural ill-conditioning, the physical features of the modelled phenomena (because of fast and sharp space and time variations of potential: namely transmembrane potential wave fronts) necessitates to resort to fine space and time grids.
Ill conditioning together with fine meshes imply very high computational costs for the bidomain model simulations that remain challenging for 3D realistic settings.
For this, many efforts were devoted to the reduction of this cost, see e.g. \cite{colli-pavarino-2004,bourgault-ethier,colli-SIAM06,lines-tveito-2006,lines-tveito-2007,perego-2010}.
\\[3pt]\indent
Few papers are dealing with the preconditioning of the bidomain model.
In \cite{pavarino-schwarz-2008} Pavarino and Scacchi proposed a preconditioner designed to a parallel implementation of the bidomain model. In \cite{nobile-precond-2009} Gerardo-Giorda et al. introduced a very interesting preconditioning strategy discussed deeper on at the end of this section.
\\
The aim of this paper is to define a general preconditioning for the bidomain system of equations.
This preconditioning is based on two simple ideas (detailed hereafter in this section): an algebraic block-$LU$ factorisation together with a  heuristic approximation.
For its implementation in practise, it only remains to define two local block preconditioners for two matrices: obtained by discretising an elliptic and a parabolic type equations respectively.
A wide class of preconditioners for such problems already has been developed, either sequential or parallel, with available implemented versions (see e.g. \cite{hackbusch-book-85,saad,ccg-96,benzi-2002,hackbusch-2001-Hmat}, details follow).
We actually can resort to any of these preconditioners to embed it into the bidomain model preconditioning here presented.
In this sense, our preconditioning framework provides a lifting from preconditioners for elliptic problems to preconditioners for the bidomain model.
\\
The natural question raised by this is: ``can we recover the (already available) high performances of elliptic problem preconditioners for the bidomain equations ?''.
This question is here addressed from the point of view of complexity.
Let $A$ denote a sparse matrix with size $n$ obtained by discretising an elliptic equation. Optimal complexity to perform $X\mapsto A^{-1}X$ is in $O(n\log(n)^\alpha)$ ($\alpha$ constant) referred to as almost linear complexity (developments on complexity matters are given in Sec. \ref{sec:prec3}).
Optimal complexity has been obtained for elliptic problems for instance using multi-grid approaches \cite{hackbusch-book-85,ccg-96} or hierarchical matrix factorisations \cite{hackbusch-2001-Hmat,lars-hackbusch-2002,lars-hackbusch-2003,lars-leborne-2008}.
In this paper we numerically prove that almost linear complexity can be reached for the bidomain model embedding a hierarchical Cholesky decomposition into our general bidomain model preconditioning.
\\[3pt]\indent
Several (equivalent) mathematical formulations of the bidomain model have been proposed: we refer to \cite{clayton-2010} for a comprehensive review.
The bidomain model can be set as a system of two coupled degenerate parabolic equations: this formulation has been used to prove existence of solutions in \cite{colli-savare-02,boulakia-2008} and numerically used e.g. in \cite{sanfelici02,colli-taccardi-05,pavarino-schwarz-2008}.
A second formulation involves a coupled parabolic-elliptic system of two equations.
This formulation has been widely studied either for theoretical or numerical purposes:
either using non-symmetric versions (see for instance \cite{nobile-precond-2009,ABKP-2010}) or a self-adjoint positive semi-definite version studied in \cite{2yves-pierre-jnla07}.
We consider here a general discretisation of the self-adjoint formulation. 
This discrete formulation of the bidomain model is here shown to be symmetric positive semi-definite: this property holds including the coupling of the heart with the surrounding tissues.
This discrete formulation of the bidomain model has already been used e.g. in \cite{belhamadia-bourgault-2008,boulakia-2010}.
\\
Embedding the strong structural properties of the bidomain model (i.e. symmetry and positivity) at the discrete level is quite natural and should provide an efficient implementation.
We personally experienced the difference between the symmetric positive formulation here adopted and the non-symmetric one in \cite{ABKP-2010}. 
A gain in CPU time of factor more than 5  was made with the symmetric positive version and for a similar resolution strategy.
\\[3pt]\indent
Let us now detail the general preconditioning strategy.
It relies on the symmetric positive semi-definite formulation of the coupled heart and torso bidomain model.
Various  space discretisations (including classical Lagrange $P^k$ finite elements or various finite volume techniques) can be considered.
For simplicity we adopted  here  an Euler semi-implicit time discretisation but the technique generalises to more sophisticated time schemes.
Once discretised, this formulation involves the inversion of one system matrix (symmetric positive semi-definite) per time step. 
The two following points are used to precondition the system matrix.
\begin{itemize}
\item[\textbf{1-}]   \textbf{$LU$ factorisation.} The system matrix displays a $2\times 2$ block structure that can be factorised into a block-$LU$ form.
\item[\textbf{2-}]   \textbf{Monodomain model heuristic.} Among the blocks of the $LU$ factorisation, all blocks have a simple definition (they are sparse and do not lead to computational difficulties) except one block. This block is shown to be symmetric positive definite and to 
be the sum of a mass matrix and of a
\textit{discrete bidomain operator} (discrete analogue of the   bidomain operator mentioned earlier on) that  is shown to be the harmonic mean between two stiffness matrices. This block, that is not sparse, is not computed but approximated using the \textit{monodomain model} approximation detailed below.
\end{itemize}
The monodomain model approximation basically consists in approximating the \textit{bidomain operator} in \cite{2yves-pierre-jnla07} (the harmonic mean between two diffusion operators) by a simple diffusion operator. 
The monodomain model can provide an accurate approximation of the bidomain model \cite{clem-nenon-horac-2004,colli-taccardi-05,dube-potse-06,PRB1-2010}.
It has been shown in  \cite{PRB1-2010} that a monodomain model could provide activation time mappings in complex situations with 1\% of relative error as compared to the bidomain model predictions.
The diffusivity tensor for the monodomain model approximation will here be set to the harmonic mean of the intra- and extra-cellular conductivity tensors.This approximation is heuristic, it is exact in dimension 1 and in case of equal anisotropy ratio between the intra- and extra-cellular media.
%
%
%
\\[3pt]\indent
In a recent paper \cite{nobile-precond-2009}, Gerardo-Giorda et al. introduced a preconditioner for the bidomain model also based on a monodomain model heuristic approximation and on a lower block triangular approximation.
Let us point out the differences between these two papers.
The $LU$ factorisation presented here should provide more efficient algorithms than the lower bock-triangular approximation since this factorisation is exact.
The formulation in \cite{nobile-precond-2009} is based on a non-symmetric formulation whereas we here considered a symmetric  positive semi-definite system matrix.
We then can benefit from symmetry and positivity properties in terms of computational efficiency, for instance resorting to a  conjugate gradient linear solver.
A draft of quantitative comparison between these two preconditioning is made in the conclusion section \ref{sec:conc}.
\\[3pt]\indent
The paper is organised as follows.
The coupled heart and torso bidomain model is stated in Sec. 2. Its numerical discretisation follows in Sec. 3.
In Sec. 4 are stated and proved the mathematical properties of the discretised bidomain problem system matrix: it is  shown to be symmetric  positive semi-definite, its $LU$ block factorisation is then analysed.
The general preconditioning of the bidomain model is defined in Sec. 5, 
sub section \ref{sec:prec3} is devoted to its practical implementation.
Numerical results are in Sec. 6. 
The two test cases are presented in 6.1. 
The complexity of the preconditioned system matrix inversion is numerically studied in Sec. 6.2. Results are discussed in the conclusion section 6.3.
\section{Bidomain model of the heart embedded in the torso}
\label{sec:model}
Let us denote by $\oo$ and $\HH$ two bounded open subsets such that
$\HH\subset \oo\subset \rr^d$ with $d=2,~3$ and with smooth
boundaries. 
We moreover assume that $\partial\oo \cap\partial\HH = \emptyset$:  $\oo$ represents a thorax and $\HH$ the region occupied
by the heart (assumed fixed here). 
We also consider
$\TT:=\oo-\overline{\HH}$ that 
will be referred to as the \textit{torso}, see Fig. \ref{fig:test-case}.
We denote $Q$, $Q_\HH$ and $Q_\TT$ the  time-space cylinders 
$\rr^+\times\oo$, $\rr^+\times\HH$ and $\rr^+\times\TT$ respectively.

Two potential fields will be involved, the transmembrane
potential $\vm:~Q_\HH\mapsto \rr$ and the potential 
$\uu:~Q\mapsto \rr$. When  restricted to $\HH$ (\textit{resp.} to $\TT$),
the potential $\uu$ is referred to the extra-cellular potential
(\textit{resp.} extra-cardiac potential).
The  transmembrane
potential  $\vm=u_i-\uu_{|\HH}$ is the difference between an intra-cellular potential $u_i:~Q_\HH\mapsto \rr$ and the
extra-cellular potential $\uu_{|\HH}$; the intra-cellular potential will not be considered in the following mathematical formulation of the problem.

The heart has a fibrous organisation implying anisotropic electrical
conductivities. The cardiac fibres rotate around the ventricular cavities, see Fig. \ref{fig:test-case}. The fibres remain tangent to the cardiac boundaries. This anisotropy is taken into account
by introducing in $\HH$ two  tensors $\Gi$ and $\Ge$. Introducing the 4
conductivity parameters  $g^l_{i,e}$, $g^t_{i,e}$, they read as follows:
\begin{align*}
  \Gi(x)=\text{Diag}(g_i^l,g_i^t),\quad \Ge(x)=\text{Diag}(g_e^l,g_e^t),
\end{align*}
in a moving system of coordinates whose principal orientation is given by the fibre
orientation at point $x$. Of course, when written in a fixed basis, these tensors  no longer are diagonal.
Physically, the parameters $g^l_{i,e}$ and $g^t_{i,e}$  are the electrical conductivities longitudinally and transversely to the fibre direction (subscript $l$ and $t$) and relatively to the intra- or extra-cellular media (index $i$ or $e$) respectively.

The torso region $\TT$ is assumed to have an isotropic but heterogeneous electrical conductivity. We define in $\TT$ the conductivity tensor $\Gt(x)=k(x)Id$ where the conductivity $k:\TT\mapsto \rr$ basically is piecewise constant on the different organs considered in $\TT$.

The torso model consists in:
\begin{equation}
  \label{eq:torso}
  \left\{ 
  \begin{aligned}
     \div(\Gt(x) \nabla \uu) &=0, \quad(t,x)\in &Q_\TT,
     \\
     \nabla \uu \cdot \n&=0 \qquad\text{on}\quad &\partial\oo,
    \end{aligned}
    \right.
\end{equation}
where $\n$ denotes the outward unit normal to $\partial\oo$.

In the heart region, the bidomain model is composed of the three following equations in $\HH$, for $(t,x)\in Q_\HH$:
\begin{equation}
  \label{eq:bid}
  \left\{ 
  \begin{aligned}
    \div((\Gi(x)+\Ge(x))\nabla \uu) &= -\div(\Gi(x)\nabla \vm) ,
    \\
    \am\left(
    \cm \partial_t \vm + \ion(\vm,\ww) -\iapp(t,x)
    \right) &= \div(\Gi(x)\nabla (\uu+\vm)),
    \\
    \partial_t \ww&= g(\vm,\ww).
  \end{aligned}
  \right.
\end{equation}
In the second equation, $\cm$ denotes the cell membrane surface capacitance, 
$\am$ is the ratio of cell membrane surface per
unit volume,
$\iapp:~ Q_\HH\mapsto\rr$ is the stimulation current (source term). 
$\ion(\vm,\ww)$ (reaction term) denotes the surface ionic current distribution on the membrane. The gating variable $\ww~:Q_\HH\mapsto\rr^p$ characterises the state of the cell membrane, its evolution is ruled by the ODE system in the third equation. 
The definitions of $\ion$ and of $g$ are fixed by the chosen ionic model in Sec. \ref{sec:ionic-model}. 
\\
Equations \eqref{eq:bid} are coupled with the torso model \eqref{eq:torso} 
with the following coupling condition:
\begin{equation}
  \label{eq:bid-coupling}
  \text{on}\quad   \partial \HH:\quad 
  \left\{ 
  \begin{aligned}
    \uu_{\vert \HH}=\uu_{\vert \TT} ~,\quad
    \Ge(x)\nabla \uu_{\vert \HH} \cdot \n&=
    \Gt(x)\nabla \uu_{\vert \TT} \cdot \n,
    \\
    \Gi\nabla \uu_{\vert \HH} \cdot \n+ \Gi\nabla \vm \cdot \n
    &=0. 
  \end{aligned}
  \right.
\end{equation}
where $\n$ denotes the outward unit normal to $\partial\HH$.

The model is closed by  imposing  initial conditions on $\vm$ and $\ww$,
\begin{equation}
    \label{eq:bid-ic}
    \vm(0,x)=\vm_0(x), \quad \ww(0,x)=\ww_0(x), \quad x\in \HH.
\end{equation}
Clearly, the potential field $\uu$ is defined up to an additive
constant.
We therefore impose the  normalisation condition for all time $t>0$:
\begin{equation}
  \label{eq:u-normalized}
  \int_\oo \uu(t,\cdot)dx = 0 .
\end{equation}

\subsection{Weak formulation}
We introduce the  tensor $\Gun$ on $\oo$:
\begin{displaymath}
  \Gun(x)=\left\{
    \begin{aligned}
      \Gi(x)+\Ge(x),\quad x\in\HH
      \\
      \Gt(x),\quad x\in\TT
    \end{aligned}
  \right.
  .
\end{displaymath}

The weak formulation of the bidomain model \eqref{eq:torso}, \eqref{eq:bid}, \eqref{eq:bid-coupling} is the following:
$\forall \psi\in H^1(\oo)$, $\forall \phi\in H^1(\HH)$,
\begin{align}
  \label{eq:impl-weak}
  \left\{
    \begin{aligned}
      \int_\oo\Gun\nabla \uu\cdot\nabla \psi dx
      +
      \int_\HH\Gi\nabla \vm\cdot\nabla \psi dx&=0,
      \\
      \am\cm\partial_t \int_\HH \vm\phi dx + \am\int_\HH (\ion(\vm,\ww)-\iapp(x,t))\phi dx
      &=
      -\int_\HH\Gi\nabla (\uu+\vm)\cdot\nabla \phi dx      ,
    \end{aligned}
  \right.
\end{align}

The first equation in \eqref{eq:impl-weak} is obtained by  multiplying \eqref{eq:torso} and the first equation in  \eqref{eq:bid} by a test function  $\psi\in H^1(\oo)$, by integrating on  $\oo$ and by using  the coupling conditions \eqref{eq:bid-coupling} and the boundary condition  \eqref{eq:torso}. 
The second equation in \eqref{eq:impl-weak} is obtained by  multiplying the second equation in  \eqref{eq:bid} by a test function  $\phi\in H^1(\HH)$, by integrating on  $\HH$ together with \eqref{eq:bid-coupling}.
\subsection{Case of an isolated heart}
\label{subsec:isol-case}
We here address the  case where the heart is considered as
isolated from the surrounding tissues. In this case we have $\HH=\oo$ and
$\TT=\emptyset$.  Equations \eqref{eq:bid} only are considered and the
coupling conditions \eqref{eq:bid-coupling} are replaced by zero flux boundary
conditions on $\partial\HH$ for $\vm$ and $\uu$.
\section{Implementation}
\label{sec:methods}
For  simplicity,
temporal discretisation is fixed to a semi implicit Euler scheme:
implicit for the diffusion and explicit on the reaction. 
Extensions to other time schemes is possible as discussed in remark \ref{rem:time-disc}.

The implementation strategy is similar for various space discretisations
including  $P^k$ Lagrange finite elements or finite volume scheme such as
the CVFE scheme
 (Control Volume Finite Element, see e.g. \cite{cai_etal_1991}) 
or such as the DDFV scheme in \cite{ABKP-2010}. 
Assumptions  $(H1)$ and $(H2)$ on the space discretisation  are
detailed in Sec. \ref{sec:imp-set} whereas the numerical scheme
itself is presented in Sec. \ref{sec:imp-scheme}.

\subsection{Settings}
\label{sec:imp-set}
Let us consider  a mesh $\Tau$ of  $\oo$ and 
  a mesh $\Tau_\HH$ of the cardiac region $\HH$: 
we assume that $\Tau_\HH$ is a sub mesh of $\Tau$, that is to say that
all elements (or cells or control volumes) of $\Tau_\HH$ also are
elements of $\Tau$.

Relatively to the considered space
discretisation, let us denote by $\rr^\Tau$,  $\rr^{\Tau_\HH}$ the set of
discrete functions attached to these two meshes. 
Their dimensions are  denoted $N$  and $N_\HH$ respectively.
A ``\textit{natural}'' basis usually is provided for $\rr^\Tau$ and
$\rr^{\Tau_\HH}$,  denoted $(U_i)_{1\le i\le N}$ and
$(U^\HH_i)_{1\le i\le N_\HH}$ respectively. In the case of $P^k$ finite element
methods, these functions simply are the standard $P^k$ Lagrange basis
functions. Considering these basis  induces an isomorphism between
$\rr^\Tau$ and $\rr^N$ and between $\rr^{\Tau_\HH}$ and  $\rr^{N_\HH}$.
A discrete function $U=\sum_{i=1}^N c_i U_i$ will be
considered either as a real function or as the real vectors $(c_i)_{i\le
  1\le N}$.
Using these identifications, the  canonical Euclidian structures
on $\rr^N$ and $\rr^{N_\HH}$ extend to $\rr^\Tau$ and 
$\rr^{\Tau_\HH}$. We denote by 
$(\cdot,\cdot)_{\Tau}$ and 
$(\cdot,\cdot)_{\Tau_\HH}$ the associated scalar products.
\\
We make the following first assumption on the space discretisation method:
\begin{itemize}
\item [$(H1)$] for all $i$, $1\le i\le N_\HH$: $U_{i\vert \HH} = U^H_i$
  (where $U_{i\vert \HH}$ denotes the restriction of the function $U_i$
  to $\HH$).
\end{itemize}
In the case of the $P^k$ finite element methods, this first assumption is
true modulo a reordering of the basis functions $(U_i)_{1\le i\le N}$.
Assumption $(H1)$ allows us to define the restriction operation:
\begin{equation}
  \label{eq:def-Pi}
  \rst~:\quad U = \sum_{i=1}^N c_i U_i \in \rr^\Tau
  \mapsto U_{\vert  \HH} = \sum_{i=1}^{N_\HH} c_i U^\HH_i \in
  \rr^{\Tau_\HH}.
\end{equation}
Equivalently, $\rst$ can be seen as a simple truncation operation:
\begin{equation*}
   \rst~:\quad U = (c_i)_{1\le i\le N} \in \rr^\Tau
  \mapsto U_{\vert  \HH} =(c_i)_{1\le i\le N_\HH} 
  \in  \rr^{\Tau_\HH},
\end{equation*}
following the above described identification between $\rr^\Tau$ and
$\rr^N$ and between $\rr^{\Tau_\HH}$ and
$\rr^{N_\HH}$. The transpose mapping $\tsp{\rst}$ for $\rst$ is:
\begin{equation*}
  \label{eq:def-TPi}
  \tsp{\rst}~:\quad
  U= \sum_{i=1}^{N_\HH} c_i U^\HH_i \in \rr^{\Tau_\HH}
  \mapsto \sum_{i=1}^{N_\HH} c_i U_i \in \rr^\Tau.
\end{equation*}
We point out that in this discrete setting $\tsp{\rst}$ does not match
the prolongation by zero outside $\HH$. The following property will be useful:
\begin{equation}
  \label{eq:h4}
  \rst \tsp{\rst} = id_{\rr^{\Tau_\HH}}.
\end{equation}
Let us introduce the mass matrices $\mass$, $\mass_\HH$ and the stiffness
matrices $\stiff_1$, $\stiff_i$ so that:
\begin{align*}
  \forall ~~U_1,U_2\in\rr^\Tau&: ~~\int_\oo U_1U_2dx = (\mass U_1,U_2)_\Tau,~~
  \int_\oo \Gun \nabla U_1\cdot\nabla U_2dx = (\stiff_1 U_1,U_2)_\Tau
  \\
  \forall ~~V_1,V_2\in\rr^{\Tau_\HH}&: ~~\int_\HH V_1V_2dx = (\mass_\HH V_1,V_2)_{\Tau_\HH},~~
  \int_\HH \Gi \nabla V_1\cdot\nabla V_2dx = (\stiff_i V_1,V_2)_{\Tau_\HH}
\end{align*}
The second assumption on the space discretisation is the following:
\begin{itemize}
\item [$(H2)$] Let us denote $\ind_\oo$ and $\ind_\HH$ the characteristic functions of
  $\oo$ and $\HH$ respectively (constant functions equal to one):
  \begin{equation}
    \label{eq:def-ind}
    \ind_\oo\in\rr^\Tau~,\quad \ind_\HH\in\rr^{\Tau_\HH}
    .
  \end{equation}
\end{itemize}
Assumption $(H2)$ is related with the considered boundary conditions here: homogeneous Neumann on $\partial\Omega$ and transmission conditions on $\partial\HH$. 
It implies that the stiffness matrices $\stiff_1$, $\stiff_i$ (that are symmetric positive semi-definite) have for kernels the one dimensional spaces $\ind_\oo\rr$ and  $\ind_\HH\rr$ respectively.

\subsection{Scheme statement}
\label{sec:imp-scheme}
The three unknowns $\vm$, $\uu$ and $\ww$ of the
(continuous) bidomain model  are represented by the discrete functions 
$\UU\in\rr^\Tau$,  $\VM\in\rr^{\Tau_\HH}$ and 
$\WW\in[\rr^{\Tau_\HH}]^p$. 
\\
We have for all test function $\Psi\in\rr^\Tau$:
\begin{displaymath}
  \int_\HH \Gi \nabla V\cdot\nabla \Psi dx =
  (\stiff_i V, \rst \Psi)_{\Tau_\HH} = (\tsp{\rst} \stiff_i V, \Psi)_{\Tau}
\end{displaymath}
Discretisation of \eqref{eq:impl-weak} thus is:
\begin{equation}
  \label{eq:bid-disc-a}
  \left\{
    \begin{aligned}
      \stiff_1 \UU^{n+1} 
      +
      \tsp{\rst} \stiff_i \VM^{n+1}&=0,
      \\
      \am\cm\mass_\HH \dfrac{\VM^{n+1}-\VM^{n}}{\Delt}
      +
      \am \mass_H\left(
        \ion(\VM^n,\WW^n)-\iapp^n
      \right)
      &=
      -\stiff_i \rst\UU^{n+1} -\stiff_i \VM^{n+1}
    \end{aligned}
  \right.
  .
\end{equation}
We introduce the positive parameter $\gamma$:
\begin{equation*}
  \gamma := \am\cm/\Delt.
\end{equation*}
\begin{proof}[\textbf{Resolution algorithm}]
The complete bidomain model
\eqref{eq:torso}~\eqref{eq:bid}~\eqref{eq:bid-coupling}
is numerically solved applying the following three operations at each time step.
\\
Being given
$\VM^n\in\rr^\Tau$ and 
$\WW^n\in[\rr^{\Tau_\HH}]^p$:
\begin{itemize}
\item[\textbf{Step 1.}]
  Compute the right hand side $Y$:
  \begin{displaymath}
    Y :=   
    \left[
    \begin{array}{l}
      0
      \\
      \mass_\HH
      \left(
        \gamma\VM^{n} -
        \am (\ion( \VM^n,\WW^n)-\iapp^n)      
      \right)
    \end{array}
  \right. .
  \end{displaymath}
\item[\textbf{Step 2.}] 
  find the solution $X=\tsp{[\UU^{n+1},\VM^{n+1}]}$ to $\sysmat X = Y$ with
  \begin{equation}
    \label{eq:system}
    \sysmat:=
    \left[
      \begin{array}{ccc}
        \stiff_1 &~& \tsp{\rst}\stiff_i
        \\
        \stiff_i\rst &~& \gamma\mass_\HH + \stiff_i
      \end{array}
    \right]
    \quad \quad 
    \text{that satisfies}
    \quad 
    \int_\oo\UU^{n+1}dx = 0.
  \end{equation}
\item[\textbf{Step 3.}]
  Update the gating variable by computing $\WW^{n+1}$ according to 
  the third equation in equation \eqref{eq:bid}.
\end{itemize} 
\end{proof}

This paper is devoted to Step 2 only. Proposition \ref{prop:wp} states that step 2 is well posed.

\section{Properties and $LU$ factorisation of the system matrix $\sysmat$}
\label{sec:sysmat}

Let us precise that
$\stiff_1:\rr^{\Tau}\mapsto \rr^{\Tau}$
and that 
$\stiff_i:\rr^{\Tau_\HH}\mapsto \rr^{\Tau_\HH}$. Then, 
$\sysmat: \rr^{\Tau}\times \rr^{\Tau_\HH}\mapsto \rr^{\Tau}\times \rr^{\Tau_\HH}$.
\begin{proposition}
  \label{prop:wp}
  The system matrix  $\sysmat$ is symmetric positive semi-definite with kernel 
  $\ker{\sysmat}=\ind_\oo\rr\times\{0\}$.
By symmetry $\sysmat$ has for range $\ran{\sysmat}= \ind_\oo^\perp\times  \rr^{\Tau_\HH} $.
  For all $(Y_1,Y_2)\in \ind_\oo^\perp\times
  \rr^{\Tau_\HH}$, there 
  exists a unique $(U,V)\in\rr^\Tau\times \rr^{\Tau_\HH}$
  such that 
  \begin{equation}
    \label{eq:prop-syslin}
    \sysmat
    \left[
      \begin{array}{c}
        U\\V
      \end{array}
    \right]
    =
    \left[
      \begin{array}{c}
        Y_1\\Y_2
      \end{array}
    \right]
    \quad \quad 
    \text{and}
    \quad 
    \int_\oo Udx=0.
  \end{equation}
\end{proposition}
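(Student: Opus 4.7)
The symmetry of $\sysmat$ follows immediately from a block-by-block inspection: $\stiff_1$, $\stiff_i$ and $\mass_\HH$ are all symmetric (they arise from symmetric bilinear forms), and the off-diagonal blocks $\tsp{\rst}\stiff_i$ and $\stiff_i\rst$ are transposes of one another. The whole strategy is then to exhibit a convenient sum-of-squares expression for the associated quadratic form and to read off both positivity and the kernel from it.

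For positive semi-definiteness, I would expand $(\sysmat[U;V],[U;V])$ using the definition of the scalar products on $\rr^\Tau$ and $\rr^{\Tau_\HH}$ together with the fact that by construction $(\stiff_i W, \rst U)_{\Tau_\HH} = \int_\HH \Gi \nabla W \cdot \nabla \rst U\,dx$. Using (H1) to identify $\rst U$ with the restriction of $U$ to $\HH$ in the integral, and splitting $\int_\oo \Gun \nabla U\cdot\nabla U\,dx = \int_\TT \Gt|\nabla U|^2 + \int_\HH (\Gi+\Ge)|\nabla U|^2$, the cross term combines with the $\Gi$ contributions to yield
\begin{equation*}
(\sysmat[U;V],[U;V]) = \int_\TT \Gt \nabla U\cdot\nabla U\,dx + \int_\HH \Ge \nabla U\cdot\nabla U\,dx + \int_\HH \Gi \nabla(U+V)\cdot\nabla(U+V)\,dx + \gamma\|V\|^2_{\mass_\HH},
\end{equation*}
which is manifestly non-negative since each conductivity tensor is pointwise positive definite and $\gamma > 0$.

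The kernel characterization is the main step, and it is where assumption (H2) really bites. If $(U,V)\in\ker\sysmat$, then the quadratic form vanishes, so the $\gamma\|V\|^2_{\mass_\HH}$ term forces $V=0$ (the mass matrix is positive definite). What remains is $(\stiff_1 U,U)_\Tau = 0$, which by (H2) characterizes $\ker\stiff_1 = \ind_\oo\rr$, so $U = c\,\ind_\oo$. Conversely, for $(c\,\ind_\oo, 0)$ the first row gives $\stiff_1(c\ind_\oo) = 0$, and the second row reads $\stiff_i\rst(c\ind_\oo) = \stiff_i(c\,\ind_\HH) = 0$ using (H1) to see that $\rst\ind_\oo = \ind_\HH$ and then (H2) to see that $\ind_\HH\in\ker\stiff_i$. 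This gives $\ker\sysmat = \ind_\oo\rr\times\{0\}$. Symmetry then yields $\ran\sysmat = (\ker\sysmat)^\perp = \ind_\oo^\perp\times\rr^{\Tau_\HH}$.

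Finally, for the well-posedness of \eqref{eq:prop-syslin}, given any $(Y_1,Y_2)\in\ind_\oo^\perp\times\rr^{\Tau_\HH}$, the set of solutions to $\sysmat[U;V] = [Y_1;Y_2]$ is a non-empty affine subspace parallel to $\ker\sysmat$, hence of the form $(U_0 + c\,\ind_\oo, V_0)$ for $c\in\rr$. The $V$ component is already unique, and the normalization $\int_\oo U\,dx = 0$ fixes $c$ uniquely via $c = -|\oo|^{-1}\int_\oo U_0\,dx$. The expected obstacle is essentially the bookkeeping around the kernel: one must carefully use (H1) to move $\rst$ in and out of integrals and basis expansions, and use (H2) for both stiffness matrices simultaneously in order to conclude that the joint kernel reduces to $\ind_\oo\rr\times\{0\}$ rather than something larger.
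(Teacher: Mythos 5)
Your proposal is correct and follows essentially the same route as the paper: the paper likewise rewrites the quadratic form as the sum $(\stiff_e U,U)_\Tau+\left((\stiff_1-\stiff_e)(U+\tsp{\rst}V),(U+\tsp{\rst}V)\right)_\Tau+\gamma(\mass_\HH V,V)_{\Tau_\HH}$ (your integral expression is exactly this, written pointwise), reads off positivity and the kernel from the definiteness of $\mass_\HH$ and from $(H2)$, and fixes the additive constant by the normalisation, with your $c=-|\oo|^{-1}\int_\oo U_0\,dx$ matching the paper's $\alpha=(\mass U,\ind_\oo)_\Tau/(\mass\ind_\oo,\ind_\oo)_\Tau$.
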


The resolution of step 2 in the resolution algorithm proceeds in two steps: first find a solution $\tsp{[X_1,X_2]}$, then normalise $X_1$. 
We now focus on the first step. 

\begin{definition}[Pseudo-inverses $\psinv{\stiff_1}$ and $\psinv{\stiff_i}$]
  \label{def:pseudo-inv}
  The stiffness matrices $\stiff_1$ and
$\stiff_i$  are isomorphisms on $\ind_\oo^\perp=\ran{\stiff_1}$ and on $\ind_\HH^\perp=\ran{\stiff_i}$ respectively. We introduce their  pseudo inverses $\psinv{\stiff_1}$ and 
$\psinv{\stiff_i}$: they  are equal to the  inverse of $\stiff_1$, $\stiff_i$  on $\ind_\oo^\perp$, $\ind_\HH^\perp$ respectively and equal to 0 on $\ind_\oo\rr$,  $\ind_\HH\rr$ respectively.
\\
Considering $\proj_\oo$ (\textit{resp}. $\proj_\HH$) the
orthogonal projection of $\rr^\Tau$ on $\ind_\oo^\perp$
(\textit{resp.} of  $\rr^{\Tau_\HH}$ on
$\ind_\HH^\perp$), we have:
\begin{displaymath}
  \psinv{\stiff_1} \stiff_1 = \stiff_1\psinv{\stiff_1} = \proj_\oo~,\quad
  \psinv{\stiff_i} \stiff_i = \stiff_i\psinv{\stiff_i} = \proj_\HH~.  
\end{displaymath}

\end{definition}

\begin{proposition}
  \label{prop:LU}
  We have the  block decomposition $\sysmat = LU$ with:
\begin{equation}
  \label{eq:LU}
  L:=\left[
    \begin{array}{ccc}
      \stiff_1 &~&  0
      \\
      \stiff_i \rst &~& 
      K
    \end{array}
  \right] ,
  \quad 
  U:=\left[
    \begin{array}{ccc}
      id_{\rr^{\Tau}}&~&  \psinv{\stiff_1}\tsp{\rst}\stiff_i
      \\
      0 &~&  id_{\rr^{\Tau_\HH}}
    \end{array}
  \right] ,
\end{equation}
The matrix $K$ is symmetric, positive definite, it is defined by:
\begin{equation}
  \label{eq:LU2}
  K := \gamma\mass_\HH + \stiff_i -  \stiff_i\rst\psinv{\stiff_1}\tsp{\rst}\stiff_i.  
\end{equation}
\end{proposition}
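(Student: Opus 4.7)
The first step is to verify the factorisation $LU=\sysmat$ block by block. Three of the four block identities drop out immediately from the matrix product; the only subtlety lies in the $(1,2)$ block, which evaluates to $\stiff_1 \psinv{\stiff_1}\tsp{\rst}\stiff_i$ and must equal $\tsp{\rst}\stiff_i$. Since $\stiff_1\psinv{\stiff_1}=\proj_\oo$ by Definition \ref{def:pseudo-inv}, I would reduce this identity to showing $\ran{\tsp{\rst}\stiff_i}\subset \ind_\oo^\perp$, i.e.\ $(\stiff_i V,\rst\ind_\oo)_{\Tau_\HH}=0$ for every $V\in\rr^{\Tau_\HH}$. Using $(H1)$ one has $\rst\ind_\oo = \ind_\HH$, and by $(H2)$ together with the symmetry of $\stiff_i$ one has $\ind_\HH\in\ker\stiff_i$, so the scalar product vanishes. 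The $(2,2)$ block identity is then precisely the definition \eqref{eq:LU2} of $K$, while the $(1,1)$ and $(2,1)$ blocks are read off directly.

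The symmetry of $K$ is a routine computation: $\mass_\HH$ and $\stiff_i$ are symmetric, and so is $\psinv{\stiff_1}$ since the pseudo-inverse of a symmetric matrix is symmetric. Hence the correction $\stiff_i\rst\psinv{\stiff_1}\tsp{\rst}\stiff_i$ equals its transpose.

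For positive definiteness, I would identify $\tfrac{1}{2}(KV,V)_{\Tau_\HH}$ with the minimum over $U\in\rr^{\Tau}$ of the quadratic form
\[
q(U,V)=\tfrac{1}{2}(\stiff_1 U,U)_\Tau+(\tsp{\rst}\stiff_i V,U)_\Tau+\tfrac{1}{2}((\gamma\mass_\HH+\stiff_i)V,V)_{\Tau_\HH},
\]
which is simply $\tfrac{1}{2}(\sysmat X,X)$ for $X=(U,V)$. The stationarity condition $\stiff_1 U^\star = -\tsp{\rst}\stiff_i V$ is solvable since the right-hand side lies in $\ind_\oo^\perp = \ran{\stiff_1}$ by the argument above, and the minimiser $U^\star=-\psinv{\stiff_1}\tsp{\rst}\stiff_i V$ yields $q(U^\star,V)=\tfrac{1}{2}(KV,V)_{\Tau_\HH}$ after substitution. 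Positive definiteness thus reduces to showing $q(U,V)>0$ whenever $V\neq 0$. Setting $W=\rst U$ and invoking the structural inequality
\[
(\stiff_1 U,U)_\Tau \geq (\stiff_i \rst U,\rst U)_{\Tau_\HH},
\]
a completion of the square gives
\[
q(U,V) \geq \tfrac{1}{2}(\stiff_i(W+V),W+V)_{\Tau_\HH}+\tfrac{\gamma}{2}(\mass_\HH V,V)_{\Tau_\HH}\geq \tfrac{\gamma}{2}(\mass_\HH V,V)_{\Tau_\HH},
\]
which is strictly positive for $V\neq 0$ since $\gamma>0$ and $\mass_\HH$ is positive definite.

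The main obstacle is establishing the structural inequality $(\stiff_1 U,U)_\Tau \geq (\stiff_i\rst U,\rst U)_{\Tau_\HH}$. It follows from the pointwise fact that $\Gun$ equals $\Gi+\Ge$ on $\HH$ and $\Gt$ on $\TT$, so that $\int_\oo \Gun\nabla U\cdot\nabla U\,dx \geq \int_\HH \Gi \nabla U_{|\HH}\cdot\nabla U_{|\HH}\,dx$; assumption $(H1)$ together with the defining relations of the stiffness matrices ensures that this continuous bound transfers to the discrete level.
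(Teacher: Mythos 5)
Your proposal is correct, but the positivity argument follows a genuinely different route from the paper's. The verification of $LU=\sysmat$ is identical in both: everything reduces to $\proj_\oo\tsp{\rst}\stiff_i=\tsp{\rst}\stiff_i$, i.e.\ $\ran{\tsp{\rst}\stiff_i}\subset\ind_\oo^\perp$, obtained from $\rst\ind_\oo=\ind_\HH\in\ker{\stiff_i}$ exactly as you do. For the positive definiteness of $K$, the paper proceeds algebraically: it first establishes the identity $K_0:=\stiff_i-\stiff_i\rst\psinv{\stiff_1}\tsp{\rst}\stiff_i=\rst\stiff_e\psinv{\stiff_1}\tsp{\rst}\stiff_i$ using $\tsp{\rst}\stiff_i\rst=\stiff_1-\stiff_e$, then shows that $K_0$ coincides on $\ind_\HH^\perp$ with the inverse of $A=\psinv{\stiff_i}+\rst\psinv{\stiff_e}\tsp{\rst}$, a manifestly positive definite operator there; this is more work but simultaneously proves the ``harmonic mean'' formula of Remark \ref{rem:K}, which the paper needs to motivate the monodomain approximation $K\simeq\gamma\mass_\HH+\stiff_m$. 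You instead recognise $K$ as the (generalised) Schur complement of $\stiff_1$ in $\sysmat$ and use its variational characterisation $\tfrac12(KV,V)_{\Tau_\HH}=\min_U q(U,V)$ — legitimate here precisely because $\ran{\tsp{\rst}\stiff_i}\subset\ran{\stiff_1}$, which you have already checked — and then bound $q$ from below by $\tfrac{\gamma}{2}(\mass_\HH V,V)_{\Tau_\HH}$ via the energy splitting $\tsp{\rst}\stiff_i\rst=\stiff_1-\stiff_e$ with $\stiff_e\succeq 0$; this is essentially the same completion of squares the paper uses to prove Proposition \ref{prop:wp}, recycled. Your route is shorter, more elementary, and even yields the sharper quantitative bound $K\succeq\gamma\mass_\HH$; what it does not deliver is the explicit harmonic-mean representation of $K_0$, so if one adopts your proof the identity in Remark \ref{rem:K} would need a separate justification. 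One small point to tighten: the ``structural inequality'' $(\stiff_1U,U)_\Tau\ge(\stiff_i\rst U,\rst U)_{\Tau_\HH}$ should be stated as the discrete identity $\tsp{\rst}\stiff_i\rst=\stiff_1-\stiff_e$ (a direct consequence of the defining bilinear forms and $(H1)$) rather than as a continuous bound that ``transfers'' to the discrete level, since the stiffness matrices are by definition the discrete bilinear forms.
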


\begin{remark}[About the matrix $K$]
  \label{rem:K}
  Let us consider the tensor
  \begin{displaymath}
    \Gv(x)=\left\{
      \begin{aligned}
        \Ge(x),\quad x\in\HH
        \\
        \Gt(x),\quad x\in\TT
      \end{aligned}
    \right.
    ,
  \end{displaymath}
  and denote $\stiff_e$ the associated stiffness matrix.
  Since $\stiff_1$ and $\stiff_e$ have the same range $\ind_\oo^\perp$, one can define the pseudo-inverse $\psinv{\stiff_e}$ for $\stiff_e$ with the same meaning as for $\stiff_1$.

  The matrix $K$ in \eqref{eq:LU2} can be rewritten as
  \begin{displaymath}
    K = \gamma\mass_\HH + 
    \left(\stiff_i^{-1} + \rst \stiff_e^{-1}\tsp{\rst}\right)^{-1}.
  \end{displaymath}
  where all inverses are pseudo-inverses. 
  This equality is precisely stated and proved in the proof of  proposition \ref{prop:LU}. 
  \\
  It is interesting to notice that the second term appears as the ``\textit{harmonic mean}'' between the stiffness matrices $\stiff_i$ and $\stiff_e$.
  At the discrete level, this is a transposition of the
  ``\textit{bidomain operator}'' as defined in \cite{2yves-pierre-jnla07} that was
  introduced as the harmonic mean between two diffusion operators.
\end{remark}

\begin{proposition}
  \label{prop:LU2}
  $L$ has a pseudo inverse $\psinv{L}$ in the
following sense:
\begin{displaymath}
  L \psinv{L} = \psinv{L} L = 
  \left[
    \begin{array}{ccc}
      \proj_\oo &~&  0
      \\
      0 &~& id_{\rr^{\Tau_\HH}}
    \end{array}
  \right],
\end{displaymath}
$U$ is invertible, $U^{-1}$ and  $\psinv{L}$ are given by:
\begin{equation}
  \label{eq:LU-inv}
    \psinv{L}=\left[
    \begin{array}{ccc}
      \psinv{\stiff_1} &~&  0
      \\
      -K^{-1} \stiff_i\rst \psinv{\stiff_1}&~& 
      K^{-1}
    \end{array}
  \right] ,
  \quad 
  U^{-1}=\left[
    \begin{array}{ccc}
      id_{\rr^{\Tau}}&~&  -\psinv{\stiff_1}\tsp{\rst}\stiff_i
      \\
      0 &~&  id_{\rr^{\Tau_\HH}}
    \end{array}
  \right].
\end{equation}
For $Y\in\ran{\sysmat}$, a solution to $\sysmat X=Y$ is 
provided by $X = U^{-1} \psinv{L} Y$.
\end{proposition}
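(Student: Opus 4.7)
The plan is to proceed in three short, mostly algebraic, steps: verify $U^{-1}$, verify $\psinv{L}$, and then derive the solution formula from $\sysmat=LU$.

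First I would check the formula for $U^{-1}$. Since $U$ is block upper triangular with identity diagonal blocks, invertibility is automatic and $U^{-1}$ must be of the same form with the off-diagonal block negated. A direct $2\times 2$ block product of $U$ with the matrix proposed as $U^{-1}$ in \eqref{eq:LU-inv} yields the identity on both sides, because the off-diagonal contributions $\psinv{\stiff_1}\tsp{\rst}\stiff_i$ and $-\psinv{\stiff_1}\tsp{\rst}\stiff_i$ cancel. No appeal to the pseudo-inverse identities is needed here.

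Next I would verify the pseudo-inverse formula for $L$ by computing the block products $L\psinv{L}$ and $\psinv{L}L$ and using Definition \ref{def:pseudo-inv}. The $(1,1)$ block gives $\stiff_1\psinv{\stiff_1}=\proj_\oo$ (respectively $\psinv{\stiff_1}\stiff_1=\proj_\oo$), the $(2,2)$ block is $K\cdot K^{-1}=id_{\rr^{\Tau_\HH}}$, and the off-diagonal blocks vanish: for $L\psinv{L}$, the $(2,1)$ block is
\begin{equation*}
\stiff_i\rst\,\psinv{\stiff_1} + K\bigl(-K^{-1}\stiff_i\rst\psinv{\stiff_1}\bigr)=0,
\end{equation*}
and the $(1,2)$ block is trivially zero; the verification of $\psinv{L}L$ is symmetric. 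This gives exactly the block-diagonal projector stated in the proposition.

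Finally I would derive the solution formula. Using Proposition \ref{prop:LU}, $\sysmat=LU$, so for any vector $X$ one has
\begin{equation*}
\sysmat\bigl(U^{-1}\psinv{L}Y\bigr)=L U U^{-1}\psinv{L}Y=L\psinv{L}Y=\begin{pmatrix}\proj_\oo & 0\\ 0 & id\end{pmatrix}Y.
\end{equation*}
The key observation — and really the only nontrivial point of the proof — is that the hypothesis $Y\in\ran{\sysmat}=\ind_\oo^\perp\times\rr^{\Tau_\HH}$, established in Proposition \ref{prop:wp}, forces the first component of $Y$ to lie in $\ind_\oo^\perp$, so $\proj_\oo$ acts as the identity on it and $L\psinv{L}Y=Y$. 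The main potential obstacle is bookkeeping with the pseudo-inverses (one must use $\stiff_1\psinv{\stiff_1}=\proj_\oo$ rather than $id$), but once the range condition on $Y$ is invoked this becomes harmless.
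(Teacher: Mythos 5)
Your strategy---direct block-by-block verification of $U^{-1}$ and $\psinv{L}$, followed by $\sysmat\, U^{-1}\psinv{L}Y = L\psinv{L}Y = Y$ using $Y\in\ran{\sysmat}=\ind_\oo^\perp\times\rr^{\Tau_\HH}$---is the natural one, and it is exactly the (unwritten) verification the paper relies on: the paper states Proposition \ref{prop:LU2} without proof. Your computations of $U^{-1}$, of $L\psinv{L}$, and the final step where $\proj_\oo$ acts as the identity on the first component of $Y$ are all correct.

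There is, however, one small but genuine gap: the verification of $\psinv{L}L$ is \emph{not} symmetric to that of $L\psinv{L}$. Its $(2,1)$ block is
\begin{displaymath}
  -K^{-1}\stiff_i\rst\,\psinv{\stiff_1}\stiff_1 + K^{-1}\stiff_i\rst
  \;=\;
  K^{-1}\stiff_i\rst\,\bigl(id_{\rr^{\Tau}}-\proj_\oo\bigr),
\end{displaymath}
which, unlike the $(2,1)$ block of $L\psinv{L}$, does not cancel identically: you still need the identity $\stiff_i\rst\proj_\oo=\stiff_i\rst$. It does hold, because $id_{\rr^{\Tau}}-\proj_\oo$ is the orthogonal projection onto $\ind_\oo\rr$ and $\stiff_i\rst\,\ind_\oo=\stiff_i\,\ind_\HH=0$ since $\ind_\HH\in\ker{\stiff_i}$ by assumption $(H2)$; equivalently, it is the transpose of the identity $\proj_\oo\tsp{\rst}\stiff_i=\tsp{\rst}\stiff_i$ established in the proof of Proposition \ref{prop:LU}. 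With that one line added, your argument is complete.
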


\begin{remark}[About the time discretisation]
  \label{rem:time-disc}
  Choosing another time discretisation scheme will basically imply two changes: the computation of the right hand side (Step 1 in the resolution algorithm above) and the definition of $K$. 
  In general the global structure of the system matrix $\sysmat$ (which is symmetric positive semi-definite) as well as the positivity of $K$ will not be affected by considering different time discretisation: this is for instance the case for the Crank-Nicolson scheme or for operator splitting schemes (Strang formula e.g.).
\end{remark}

\begin{proof}[Proof of proposition \ref{prop:wp}]
For $X=\tsp{(U,V)}\in\rr^\Tau\times \rr^{\Tau_\HH}$, we have:
\begin{displaymath}
  \tsp{X} \sysmat X = 
  (\stiff_1 U,U)_\Tau
  +2(\stiff_i\rst U,V)_{\Tau_\HH}
  +(\stiff_i V,V)_{\Tau_\HH}
  +\gamma(\mass_\HH V,V)_{\Tau_\HH}
\end{displaymath}

We consider $\stiff_e$ and $\Gv$ defined in Rem. \ref{rem:K}.
Since $\Gun-\Gv$ is equal to 0 on $\TT$ and to $\Gi$ on $\HH$, $\stiff_1-\stiff_e$ is positive semi-definite.

Equation \eqref{eq:h4} says that $\left(\tsp{\rst}V\right)_{\vert
  \HH}=\rst\tsp{\rst}V=V$. Together with $\Gun-\Gv=0$ outside $\HH$ one gets:
\begin{align*}
  (\stiff_i V,V)_{\Tau_\HH} &=\int_\HH (\Gun-\Gv) \nabla V\cdot V ~dx
  \\
  &= \int_\oo (\Gun-\Gv)\nabla \tsp{\rst}
  V\cdot\nabla \tsp{\rst} V dx = 
  \left((\stiff_1-\stiff_e) \tsp{\rst} V,\tsp{\rst} V\right)_{\Tau}
  \\
  (\stiff_i\rst U,V)_{\Tau_\HH} &
  =\int_\HH (\Gun-\Gv)\nabla \rst U\cdot \nabla V dx
  \\
  &= \int_\oo (\Gun-\Gv)\nabla U\cdot\nabla 
  \tsp{\rst} V dx = \left((\stiff_1-\stiff_e) U,\tsp{\rst} V\right)_{\Tau}.
\end{align*}
From these two equalities we deduce that:
\begin{displaymath}
  \tsp{X} \sysmat X = 
  (\stiff_e U,U)_\Tau
  +
  \left((\stiff_1-\stiff_e) (U+\tsp{\rst} V), (U+\tsp{\rst} V) \right)_{\Tau}
  +\gamma(\mass_\HH V,V)_{\Tau_\HH} 
\end{displaymath}
so ensuring that $\sysmat$ is positive semi-definite. Assuming that $\sysmat X = 0$ implies
that all the terms on the right of the last equality are equal to zero. The mass
matrix being definite this means $V=0$ and so $\stiff_1 U =0$. Thus
 $U\in\ker{\stiff_1}=\ind_\oo \rr$ and we then have $\ker{\sysmat}=\ind_\oo\rr\times\{0\}$. 

Let
$X=\tsp{[U,V]}$ be a solution to  $\sysmat X=Y$
for $Y\in\ran{\sysmat}$. A simple computation shows that 
$Z=\tsp{[U-\alpha \ind_\oo,V]}$ is the unique solution to
\eqref{eq:prop-syslin} iff 
$
\alpha = (\mass U, \ind_\oo)_\Tau / (\mass \ind_\oo,\ind_\oo)_\Tau
$, so ending the proof.
\end{proof}

\begin{proof}[Proof of proposition \ref{prop:LU}]
  We have:
  \begin{displaymath}
    LU =     \left[
      \begin{array}{ccc}
        \stiff_1 &~& \proj_\oo \tsp{\rst}\stiff_i
        \\
        \stiff_i\rst &~& \gamma \mass_\HH + \stiff_i
      \end{array}
    \right],
  \end{displaymath}
  and so $LU=\sysmat$ \textit{iif} $ \proj_\oo \tsp{\rst}\stiff_i=
  \tsp{\rst}\stiff_i$. This last equality holds since for all $V\in\rr^{\Tau_\HH}$,
  \begin{displaymath}
    \left(\tsp{\rst}\stiff_i V, \ind_\oo\right)_\oo= 
    \left(\stiff_i V, \rst\ind_\oo\right)_\HH=
    \left(\stiff_i V, \ind_\HH\right)_\HH=0,
  \end{displaymath}
and so $\ran{\tsp{\rst}\stiff_i}\subset \ind_\oo^\perp$.
 
  The symmetry of $K$ is obvious. 
  Let us prove it is positive definite.
  \\
  We decompose $K = \gamma\mass_\HH + K_0$ so with 
  $K_0:=\stiff_i - \stiff_i\rst\psinv{\stiff_1} \tsp{\rst}\stiff_i$.
  We will prove that $K_0$ (which is symmetric) is positive semi-definite. This
  implies the positivity of $K$ since $\gamma\mass_\HH$ is
  positive definite. Precisely: $K_0$ clearly vanishes on $\ind_\HH
  \rr$. Then  $\ind_\HH^\perp$ is stable by $K_0$.
  Let us prove that $K_0$ is positive definite on $\ind_\HH^\perp$.
  
  We consider again
  $\stiff_e$ and $\Gv$ defined in Rem. \ref{rem:K}. 
  Let us first prove that:
  \begin{equation}
    \label{eq:pr1}
    K_0 = \rst\stiff_e \psinv{\stiff_1}\tsp{\rst}\stiff_i
  \end{equation}
  Firstly, we have: $\forall ~U_1.U_2\in\rr^\Tau$,
  \begin{displaymath}
    \int_\oo (\Gun-\Gv)\nabla U_1\cdot\nabla U_2 dx = 
    \int_\HH \Gi\nabla U_1\cdot\nabla U_2 dx,
  \end{displaymath}
  and so $\tsp{\rst} \stiff_i\rst = \stiff_1-\stiff_e$. 
  \\
  Secondly, multiplying $K_0$ by $\rst\tsp{\rst}=id_{\rr^{\Tau_\HH}}$ on
  the left gives:
  \begin{align*}
  K_0 = \rst \tsp{\rst} K_0 
  &=  
  \stiff_i -
  \rst \tsp{\rst}\stiff_i\rst\psinv{\stiff_1} \tsp{\rst}\stiff_i 
  \\
  &=
  \stiff_i -
  \rst (\stiff_1-\stiff_e)\psinv{\stiff_1} \tsp{\rst}\stiff_i 
  \\
  &= 
  \stiff_i -
  \rst (\proj_\oo-\stiff_e\psinv{\stiff_1} )\tsp{\rst}\stiff_i 
  \\
  &= 
  \rst\stiff_e  \psinv{\stiff_1}\tsp{\rst}\stiff_i
  +  \stiff_i -  \rst\proj_\oo\tsp{\rst}\stiff_i.
\end{align*}
One already showed in this proof that $ \proj_\oo \tsp{\rst}\stiff_i=
\tsp{\rst}\stiff_i$ ensuring that $\rst\proj_\oo\tsp{\rst}\stiff_i=
\stiff_i$. This gives us 
\eqref{eq:pr1}. 

Clearly $\psinv{\stiff_e}$ and
$\psinv{\stiff_i}$ are positive definite on $\ind_\oo^\perp$ and $\ind_\HH^\perp$
respectively. We moreover have $\tsp{\rst}(\ind_\HH^\perp)\subset
\ind_\oo^\perp$ since for all $V\in\ind_\HH^\perp$:
\begin{displaymath}
  \left(\tsp{\rst}V,\ind_\oo\right)_\oo =
  \left(V,\rst\ind_\oo\right)_\HH =
  \left(V,\ind_\HH\right)_\HH=0. 
\end{displaymath}
Then $\rst\psinv{\stiff_e}\tsp{\rst}$ is positive
definite on $\ind_\HH^\perp$. Let us define $A:=(\psinv{\stiff_i} +
\rst\psinv{\stiff_e}\tsp{\rst})$: $\ind_\HH^\perp$ is stable by $A$. $A$
is positive definite and so invertible on $\ind_\HH^\perp$.
We will end this proof by showing that $K_0=A^{-1}$ on $\ind_\HH^\perp$.
\begin{align*}
  K_0 A
  &= (\rst\stiff_e \psinv{\stiff_1}\tsp{\rst}\stiff_i)(\psinv{\stiff_i} + \rst\psinv{\stiff_e}\tsp{\rst})
  \\
  &=
  \rst \stiff_e \psinv{S_1} \tsp{\rst} \proj_\HH
  +
  \rst\stiff_e \psinv{\stiff_1}\tsp{\rst}\stiff_i
  \rst\psinv{\stiff_e}\tsp{\rst}
  \\
  &=
  \rst \stiff_e \psinv{S_1} \tsp{\rst} \proj_\HH
  +
  \rst\stiff_e
  \psinv{\stiff_1}(\stiff_1-\stiff_e)\psinv{\stiff_e}\tsp{\rst}
  \\
  &=
  \rst \stiff_e \psinv{S_1} \tsp{\rst} \proj_\HH
  +
  \rst\stiff_e
  (\proj_\oo\psinv{\stiff_e}-\psinv{\stiff_i}\proj_\oo)\tsp{\rst}  
  \\
  &=
  \rst \stiff_e \psinv{S_1} \tsp{\rst} \proj_\HH
  +
  \rst\stiff_e
  (\psinv{\stiff_e}-\psinv{\stiff_i})\tsp{\rst}  
  \\
  &=
  \rst\proj_\oo\tsp{\rst} + \rst \stiff_e \psinv{S_1} \tsp{\rst}(
  \proj_\HH - id_{\rr^{\Tau_\HH}}).
\end{align*}
Clearly, 
$\proj_\HH - id_{\rr^{\Tau_\HH}}$ vanishes on $\ind_\HH^\perp$. Moreover,
since $\tsp{\rst}(\ind_\HH^\perp)\subset
\ind_\oo^\perp$, $\rst\proj_\oo\tsp{\rst}$ is the identity on
$\ind_\HH^\perp$. Thus $K_0 A V = V$ for all $V\in\ind_\HH^\perp$. 
\end{proof}

\section{Preconditioning}
\label{sec:prec}

 The previously studied algebraic  properties of the system matrix $\sysmat$  naturally suggest a block-$LU$ designed preconditioner for $\sysmat$, here defined in Sec. \ref{sec:prec1}.
This general algebraic setting is the first key ingredient towards the preconditioning of the bidomain model. 
\\
The second key ingredient is a heuristic approximation of the matrix $K$, presented in Sec. \ref{sec:prec2}. 
\\
The last layer to practically implement the subsequent preconditioning indeed is discussed in Sec. \ref{sec:prec3}.

\subsection{Preconditioner definition}
\label{sec:prec1}
The practical strategy to solve \eqref{eq:system} will be to use an iterative solver for the left preconditioned system:
\begin{displaymath}
  \prec_\sysmat^{-1} \sysmat X = \prec_\sysmat^{-1} Y,
\end{displaymath}
for a global preconditioner $\prec_\sysmat$ defined as follows.
\begin{definition}
  \label{def:prec}
  Let us consider $\prec_1$ a preconditioner for $\stiff_1$ and $\prec_K$ a preconditioner for $K$. 
  We define a global preconditioner $\prec_\sysmat$ for $\sysmat$ as:
  \begin{align}
    \label{eq:prec-def}
    \prec_\sysmat
    \ = L_\prec U_\prec~,\quad 
    L_\prec:=\left[
      \begin{array}{ccc}
        \prec_1 &~&  0
        \\
        \stiff_i \rst &~& 
        \prec_K
      \end{array}
    \right] ,
    \quad 
    U_\prec:=\left[
      \begin{array}{ccc}
        id_{\rr^{\Tau}}&~&  \prec_1^{-1}\tsp{\rst}\stiff_i
        \\
        0 &~&  id_{\rr^{\Tau_\HH}}
      \end{array}
    \right].
  \end{align}
  The inversion of $P_\sysmat$ is achieved as follows. 
  The solution $X$ to $\prec_\sysmat X = Y$ is given by $X = U_\prec^{-1}L_\prec^{-1} Y$ with:
  \begin{equation}
    \label{eq:prec2}
    L_\prec^{-1}:=     
    \left[
      \begin{array}{ccc}
        \prec_1^{-1} &~&  0
        \\
        -\prec_K^{-1}\stiff_i\rst\prec_1^{-1} &~& 
        \prec_K^{-1}
      \end{array}
    \right] 
    ~,\quad 
    U_\prec^{-1}:=     
    \left[
      \begin{array}{ccc}
        id_{\rr^{\Tau}}&~&  -\prec_1^{-1}\tsp{\rst} \stiff_i
        \\
        0 &~& id_{\rr^{\Tau_\HH}}
      \end{array}
    \right]
    . 
  \end{equation}
\end{definition}
Neglecting the vector additions, the operational cost to compute $X=\sysmat Y$ is:
\begin{itemize}
\item[-] 2 multiplications by $\stiff_i$ 
\item[-] 1 multiplication by $\stiff_1$
\item[-] 1 multiplication by $\mass_\HH$,
\end{itemize}
whereas the operational cost to compute $X=P_\sysmat^{-1} Y$ is:
\begin{itemize}
\item[-] 2 inversions of $\prec_1$,
\item[-] 1 inversion  of $\prec_K$,
\item[-] 2 multiplications by $\stiff_i$,
\end{itemize}
The symmetry and positivity properties of $\sysmat$ allow to resort to a Preconditioned Conjugate Gradient (PCG) algorithm to solve \eqref{eq:system}.
The cost for this iterative solver (again neglecting scalar products and vector additions)
is for each step: one multiplication by $\sysmat$ and one inversion of $\prec_\sysmat^{-1}X=Y$. 

\subsection{Heuristic approximation of $K$}
\label{sec:prec2}
The hard task for the definition of $P_\sysmat$ in \eqref{eq:prec-def} is the definition of $P_K$.
As developed in Rem. \ref{rem:K}, $K$ has a complex structure:
\begin{displaymath}
  K = \gamma \mass_\HH + K_0,
\end{displaymath}
where $K_0$ is a non-sparse matrix obtained by making the harmonic mean between $\stiff_i$ and $\stiff_e$.
Since $K$ is a full matrix, it will never be computed and the alternative strategy to define $P_K$ is to derive an approximation of $K$ displaying a sparse pattern.

Let us consider the tensor $\G_{m}$:
\begin{equation*}
  \G_m (x) := (\G_e^{-1}(x) + \G_i^{-1}(x))^{-1}~,\quad x\in\HH,
\end{equation*}
which is the harmonic mean between $\G_i$ and $\G_e$. We introduce the stiffness matrix $\stiff_m$ associated to $\G_m$ acting on $\rr^{\Tau_\HH}$. We make the following approximation:
\begin{displaymath}
  K \simeq K_m := \gamma \mass_\HH + \stiff_m.
\end{displaymath}
This approximation is referred to as the \textit{monodomain model approximation} \cite{colli-taccardi-05}.
\\
The matrix $K_m$ has a simple structure. It is  the discretisation matrix of a parabolic equation. It is moreover symmetric, positive definite and sparse (with the same pattern as $S_i$). 

\subsection{Practical implementation of $P_1$ and $P_K$}
\label{sec:prec3}
The two preconditioners $P_1$ and $P_K$ will be built from the matrices $\stiff_1$ and $K_m$ respectively.
These matrices (sparse, symmetric positive semi-definite) have classical structures arising from the discretisation of elliptic and parabolic problems respectively.
A wide literature has been devoted to the preconditioning of such matrices: among classical choices we not comprehensively quote incomplete decomposition methods (incomplete $LU$ or incomplete Cholesky, see e.g. \cite{saad}) multi-grid or multi-level methods, see \cite{hackbusch-book-85,ccg-96}.
Fixing one of these classical possible choices actually provide a fully defined implementation of the here presented bidomain model preconditioning.
\\
We insist on the versatility of this bidomain model preconditioning.
This versatility  relies on the freedom for the choice of $P_1$ and $P_K$. 
\begin{remark}[Parallelisation]
  At this stage, let us underline the consequences on parallelisation induced by this versatility  characteristic of the bidomain model preconditioning.
Once embedded into some iterative solver (e.g. CG or GMRes) the resolution of system \eqref{eq:system} preconditioned by $P_\sysmat$ only requires:
\begin{itemize}
\item[-~] matrix vector multiplications by $\sysmat$,
\item[-~] inversions of $P_\sysmat X =Y$: as detailed in Sec. \ref{sec:prec3} this operation consists in matrix vector multiplication and inversions of $P_1 X =Y$ and of $P_K X =Y$,
\item[-~] various remaining operations, such as scalar products..
\end{itemize}
Except the inversions of $P_1 X =Y$ and of $P_K X =Y$, all these operations have trivial parallelisation.
But since $P_1$ and $P_K$ are preconditioners for classical elliptic or parabolic discretised PDEs, classical parallel versions for $P_1$ and $P_K$ already are available. For instance a review of algebraic methods (such as parallel version of incomplete factorisations) is provided in 
\cite{benzi-2002,saad}. Another wide class of parallelisation strategies based on domain decomposition is analysed in \cite{quart-valli-dom-dec} and also described in \cite{saad}. For instance the multi-level additive Schwarz preconditioner, such as presented in \cite{pavarino-schwarz-2008} and applied to the bidomain model, also could be incorporated inside the here presented general preconditioning framework.
\\
For this reason, the here presented preconditioning strategy for the bidomain model naturally fits with the constraints of parallelism.
\end{remark}
%
Optimal complexity to solve a discretised elliptic problem $AX=Y$ is $O(n)$ with $n$ the system size: since $X\mapsto AX$ has $O(n)$ complexity one cannot hope better for $Y\mapsto A^{-1}Y$ ($A$ being sparse whereas $A^{-1}$ is full). Although this optimality can be reached for some particular problems (for instance in case $A$ is tri-diagonal), in practise the most efficient algorithms have \textit{almost linear complexity}: that is $O(n \log(n)^\alpha)$ with $\alpha$ a constant. 

Hierarchical matrices preconditioning strategy 
\cite{hackbusch-2001-Hmat,lars-hackbusch-2002,lars-hackbusch-2003,lars-leborne-2008}
provides such an almost linear complexity (among various possible choices such as multi-grid methods \cite{hackbusch-book-85}). 
This method will be used for the numerical results in Sec. \ref{sec:num} to precondition $S_1$ and $K_m$. 
This method proceeds in two steps.
Firstly compute an approximation of the considered matrix (here $S_1$ or $K_m$).
This approximation is built using hierarchical matrices arithmetic (basically including  block partition of the matrix and defining a blockwise approximation by low rank matrices), ensuring low storage cost.
This approximation accuracy is controlled by the parameter $\epsilon$: in matrix norm the error goes to 0 with $\epsilon$.
Secondly perform the exact decomposition (either $LU$ or Cholesky) of this approximation.
Hierarchical Cholesky decomposition has been used here to build $P_1$ and $P_K$. Taking advantage of the hierarchical arithmetic, both the construction, storage and inversion of the preconditioners are in $O(n\log(n)^\alpha)$, precisely with $\alpha=2$ (resp. 4) for the decomposition and $\alpha=1$ (resp. 2) for the storage/inversion in dimension 2 (resp. 3).
%
\\
The setting of the accuracy parameter $\epsilon$ strongly impacts the preconditioning efficiency.
Naturally the PCG convergence rate increases as $\epsilon$ goes to 0.
A convergence in one single PCG iteration is expected provided a small enough value for $\epsilon$.
Meanwhile the preconditioner inversion cost increases as $\epsilon\mapsto  0$: thus the highest PCG convergence rate may not correspond to the most efficient setting of the preconditioner. 
An optimal value for $\epsilon$ (not too small but not too large) has to be searched. PCG convergence rate for such optimal value are shown in Sec. \ref{sec:res} for which   3 PCG iterations typically have to be performed.

In practise the construction of  $P_1$ and $P_K$ was made using the H-Lib library from L. Grasedyck and S. B\"{o}rm\footnote{http://www.hlib.org/}. 
The sequential version of the code has been used: a parallel version also is  available.
\section{Numerical results}
\label{sec:num}
The efficiency of the preconditioner presented in Sec. \ref{sec:prec} is analysed in this section. 
The bidomain model has been implemented following Sec. \ref{sec:methods} and using the CVFE finite volume spatial discretisation (see e.g. \cite{cai_etal_1991}). For this spatial discretisation the degrees of freedom are located at the mesh vertices and the mass matrices are diagonal.
Two test cases are considered,
they are detailed in Sec. \ref{sec:ionic-model}.
For these two test cases a depolarisation potential wave is simulated. 
The spreading of depolarisation 
The cost for the inversion of the preconditioned system 
\eqref{eq:system} is measured during the spreading of the depolarisation wave, that numerically is by far the stiffest part of the simulation.
The dependence of this cost on the problem size is then analysed.
For this a series of meshes $\Tau_n$ is considered with an increasing number of vertices $\text{DOF}(n)$. 
We here aim to validate an almost linear dependence of the cost on $\text{DOF}(n)$.
\\
The cost has been measured in two ways. 
Firstly in terms of CPU time.
The averaged CPU time spent on the inversion of system \eqref{eq:system}  during the depolarisation sequence is denoted $\text{CPU}(n)$.
The logarithmic growth rate $r_n$ of $\text{CPU}(n)$ relatively to  $\text{DOF}(n)$ will be considered:
\begin{equation}
  \label{eq:growth-rate}
  r_n =
  \dfrac{\log(\text{CPU}(n)/\text{CPU}(n-1))}
  {\log(\text{DOF}(n)/\text{DOF}(n-1)}.
\end{equation}
The CPU time measurements however might  be perturbed by cache effects and memory-access differences for large-scale problems. 
To cope with this, the cost also is evaluated in terms of number of iterations.
The averaged number of iterations required by the PCG algorithm to invert \eqref{eq:system}  during the depolarisation sequence is denoted $\text{Iter}(n)$. Each step of the PCG algorithm requires one multiplication by $\sysmat$ and one inversion of $P_\sysmat$. These operations  are of linear and almost linear complexity with $\text{DOF}(n)$ respectively.
Thus  a constant or logarithmic behaviour is expected for $\text{Iter}(n)$ to validate an almost linear complexity of the preconditioning.
\\
Numerical results for the preconditioning complexity are presented and discussed in Sec. \ref{sec:res} and \ref{sec:conc} respectively.

\subsection{Test cases}
\label{sec:ionic-model}
\begin{table}
  \centering
  \begin{tabular}[h]{lclc}
    \hline
    & &
    \textbf{Values} &
    \textbf{Unit}
    \\
    \hline
    Cell membrane surface-to-volume ratio (2D) & \quad& $\am=1500$ &  [cm$^{-1}$]
    \\
    Cell membrane surface-to-volume ratio (3D) & \quad& $\am=500$ &  ''
    \\
    Membrane surface capacitance & & $\cm=1.0$  &  [$\mu$ F/cm$^2$]
    \\
    Longitudinal intra-cellular conductivity && $g_i^{l}= 1.741$ & [mS/cm]
    \\
    Transverse intra-cellular conductivity && $g_i^{t}= 0.1934$ &  ''
    \\
    Longitudinal extra-cellular conductivity && $g_e^{l}= 3.906$ &  ''
    \\
    Transverse extra-cellular conductivity && $g_e^{t}= 1.970$ &  ''
    \\
    Lung conductivity &&  $ 0.5$ & ''
    \\
    Blood conductivity (ventricular cavities) && $ 6.7$ &  ''
    \\
    Remaining tissues conductivity  &&   $ 2.2$ &  ''
    \\ \hline \\[3pt]
  \end{tabular}
  \caption{Model parameters}
  \label{tab:parameters}
\end{table}
For the two test cases, the reaction terms $\ion(\vm,\ww)$ and $g(\vm,\ww)$ in \eqref{eq:bid} have been set to the Luo and Rudy ionic model of  class II \cite{LR2a} designed for mammalian ventricular cells and for which the system of ODEs in \eqref{eq:bid} is of size 20 (i.e. $\ww\in\rr^{20}$). 
The model parameters $\am$, $\cm$ as well as the conductivities  are displayed in Tab.\ref{tab:parameters}: these values are physiological values taken from \cite{leguyader_01,buist-pullan-smith-03}.  

\vspace{5pt}
\textbf{2D test case.} 
\begin{figure}[!ht]
  \centering
  \begin{tabular}{ccc}
      \includegraphics[width=120pt]{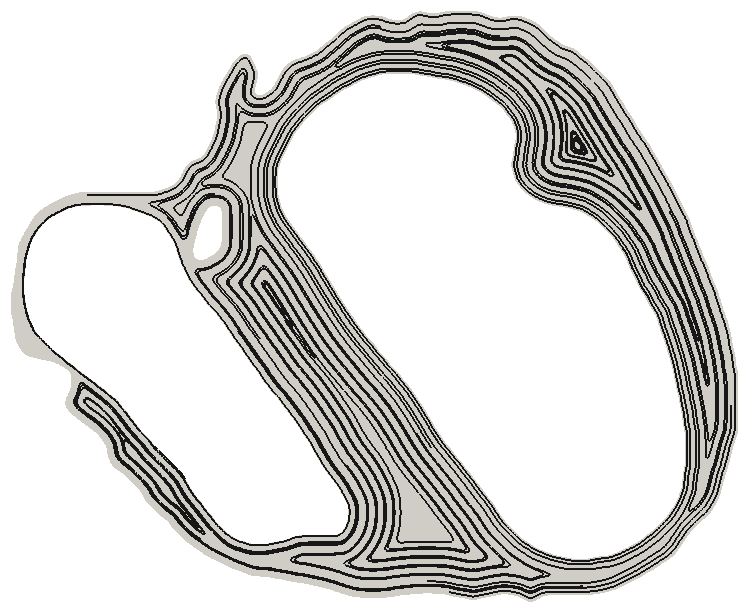}
      &
      \includegraphics[width=150pt]{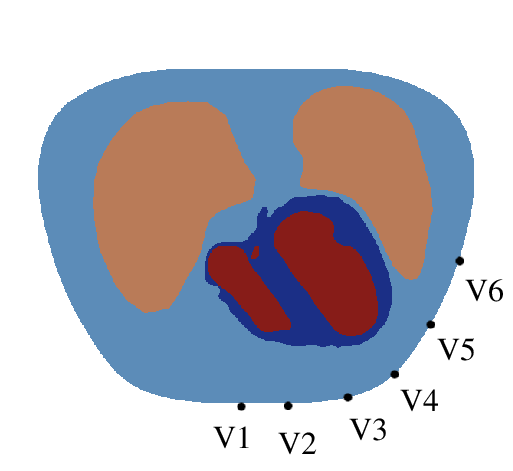}
      &
      \includegraphics[width=120pt]{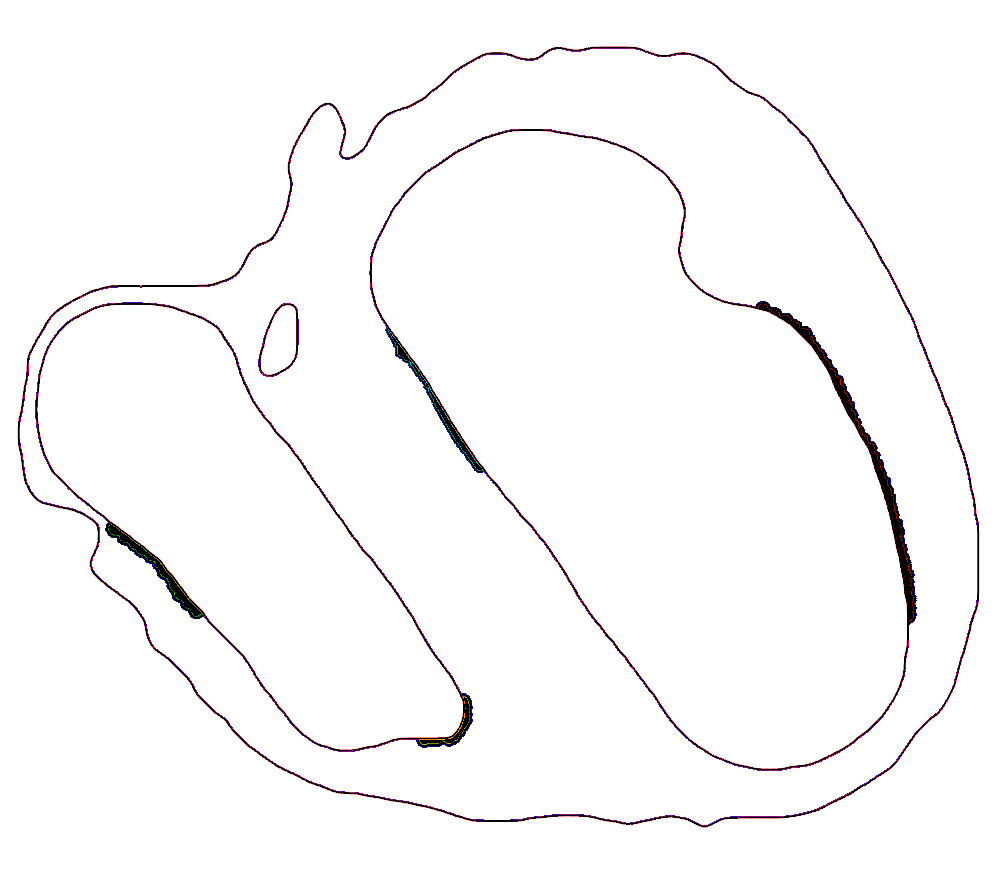}
  \end{tabular}
  \caption{2D test case description. Left: fibrous anisotropic structure of the two ventricles. 
    Middle: 2D geometry $\oo$ and its sub-domains. 
    body surface potential (ECG) are recorded at the vertices V1 to V6.
    Right: stimulation site locations.
  }
  \label{fig:test-case}
\end{figure}
The domain $\oo$ is an horizontal slice of a human thorax. This geometry has been obtained by segmentation of a medical image (CT-Scan, courtesy of the Ottawa Heart Institute) with resolution 0.5 $mm$. 
We refer to \cite{oli-these,oli-2009} for details on the segmentation procedure.
The segmented image is depicted in Fig. \ref{fig:test-case}. 
It includes 4 sub-domains: the two ventricles ($\HH$) and the torso ($\TT$) made of the ventricular cavities, the lungs and the remaining tissues. 
\\
Four meshes $(\Tau_n)_{n=1\dots 4}$ of $\oo$ will be considered: with
DOF(1)=143 053, DOF(2)=344 408, DOF(3)=684 112 and DOF(4)=1 257 312.
The associated time steps are $\Delt=$ 0.07, 0.05, 0.035 and 0.025 milli seconds (ms) respectively.
\\
The anisotropic structure of the two ventricles is displayed on Fig. \ref{fig:test-case}: bundles of fibres rotating around the ventricular cavities have been considered. Inside the torso $\TT$, heterogeneous conductivities have been considered for each sub-domains: the lungs, ventricular cavities and the remaining tissues conductivities are given in Tab. \ref{tab:parameters}.
\\
With these settings, a depolarisation potential wave is simulated. 
For this a stimulation current $\iapp(x,t)$ (see equation \eqref{eq:bid}) is applied during 1 ms at four locations (stimulation sites) on the ventricular cavities as depicted on Fig. \ref{fig:test-case};
the right ventricle being stimulated 5 ms later than the left one.
\\
The spreading of this potential wave across the myocardium is depicted on Fig. \ref{fig:2d-dep}. 
The transmembrane potential $\vm$ in the heart is depicted 15, 30 and 45 ms after stimulation on the left. 
Without entering the details: the region in blue is at rest potential ($\vm\simeq-90$ mV) whereas the region in red is excited ($\vm\simeq 50$ mV). 
Downward: the excitation wave starts at the stimulation site location and then spreads throughout the cardiac tissue.
The activation time $\phi(x)$ is computed pointwise as the time $t=\phi(x)$ so that $\vm(\phi(x),x)=-20$ mV (the time instant when the depolarisation wave reaches the point $x$). Activation time are depicted on Fig. \ref{fig:2d-ecg}.
\\
The modifications on the extra-cellular (and extra-cardiac) potential $\uu$ on $\oo$ (heart and torso) induced by the transmembrane depolarisation wave spreading also is depicted on Fig. \ref{fig:2d-dep}.
The body surface potential (ECG) is recorded at 6 points on $\partial\oo$, their location is depicted on Fig. \ref{fig:test-case} (points V1 to V6). These potentials $(\uu(t,Vi))_{i=1\dots 6}$ are recorded at each time step along a complete cardiac cycle (including depolarisation and repolarisation). Results are depicted on Fig. \ref{fig:2d-ecg} on the right for the two electrodes V2 and V6.

\begin{figure}[!ht]
  \centering
  \begin{tabular}{cc}
      \includegraphics[width=200pt]{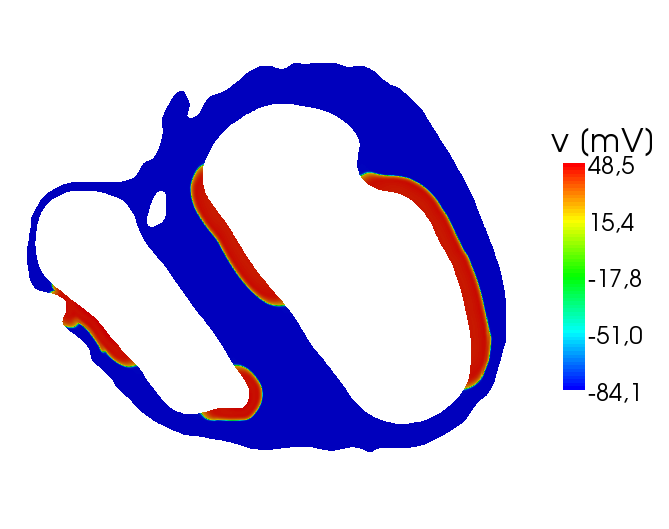}
      &
      \includegraphics[width=200pt]{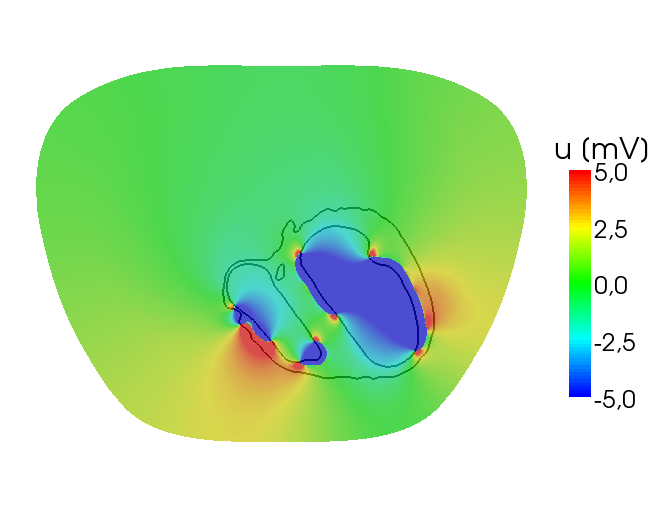}
      \\
      \includegraphics[width=200pt]{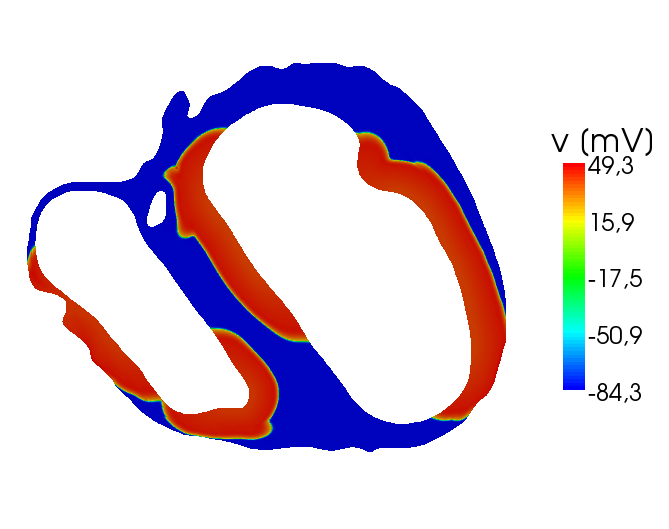}
      &
      \includegraphics[width=200pt]{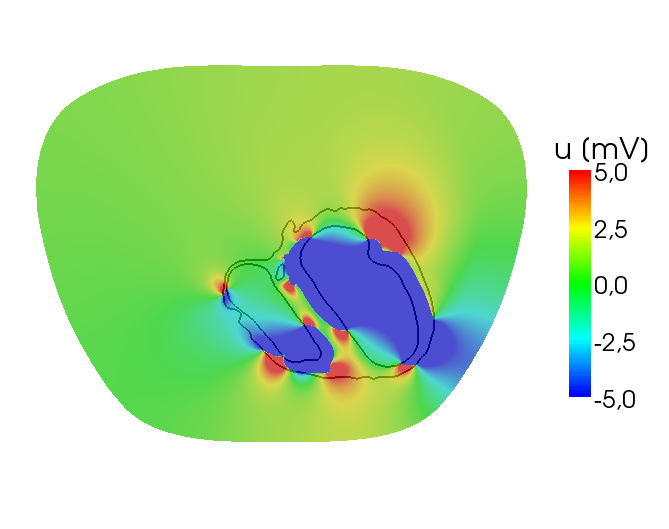}
      \\
      \includegraphics[width=200pt]{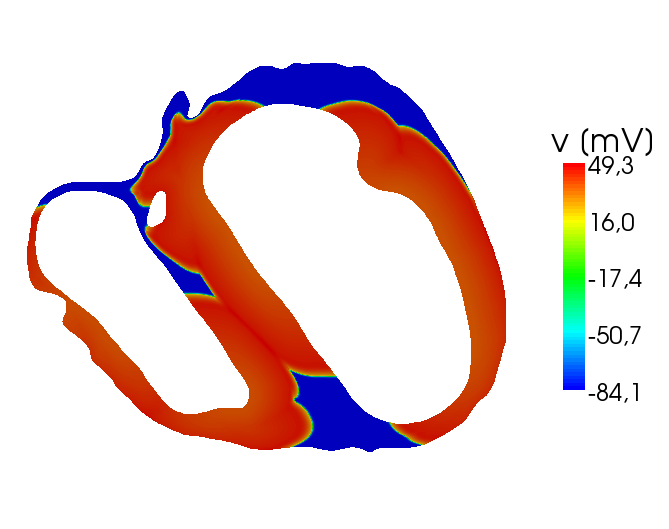}
      &
      \includegraphics[width=200pt]{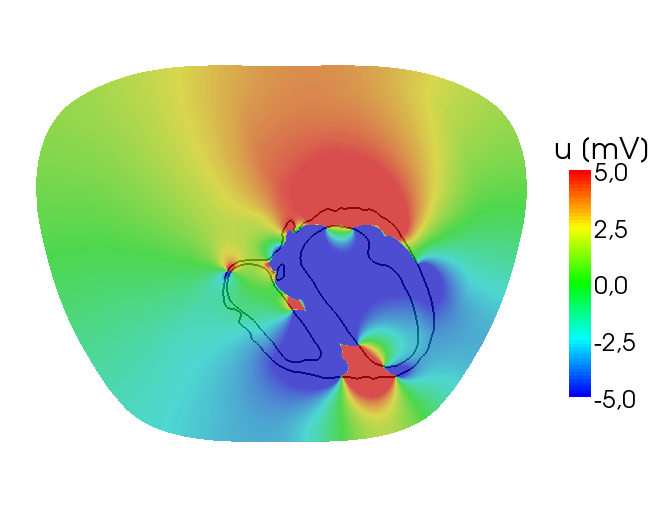}
  \end{tabular}
  \caption{2D simulation. Left: depolarisation sequence of the heart, the transmembrane potential $\vm$ is represented 15, 30 and 45 ms after stimulation. Right: associated potential $\uu$ in the heart and in the extra cardiac region.
  }
  \label{fig:2d-dep}
\end{figure}
\begin{figure}[!ht]
  \centering

  \begin{tabular}{cc}
    \begin{tabular}{c}
      \includegraphics[width=200pt]{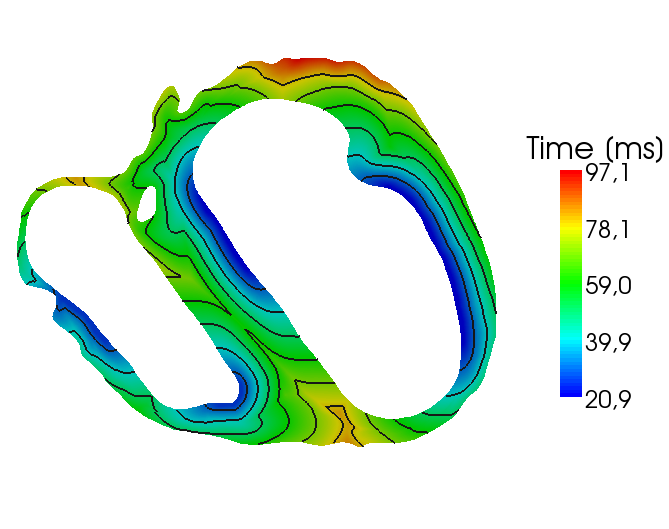}
    \end{tabular}
    &
    \begin{tabular}{c}
       \includegraphics[width=220pt]{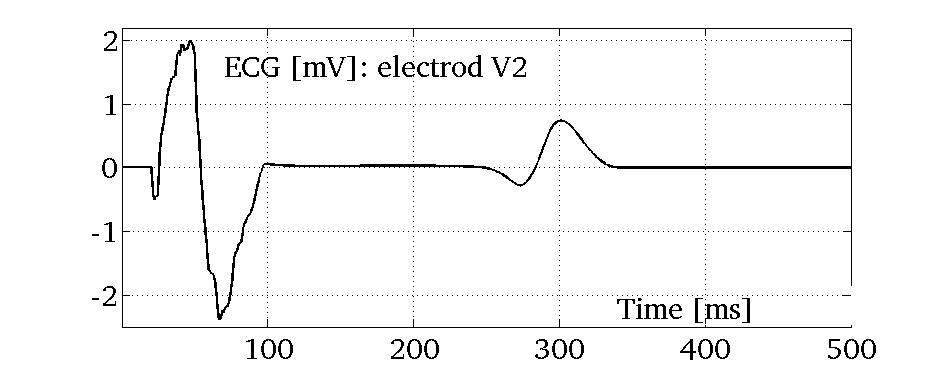}
       \\
       \includegraphics[width=220pt]{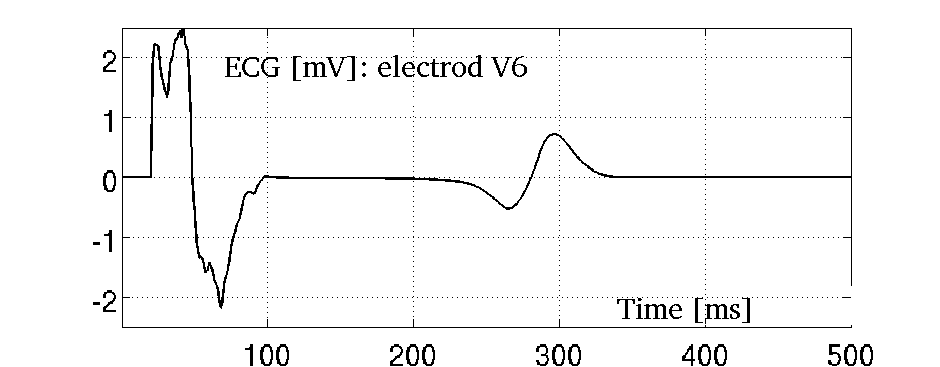}      
    \end{tabular}
  \end{tabular}
  \caption{2D simulation. Left: activation time in the heart, isolines in black are separated by 10 ms.
    Right: ECG recordings, the extra-cardiac potential is recorded on the torso surface at two points located at electrodes V2 (above) and V6 (below), see figure \ref{fig:test-case} for the electrode location.
  }
  \label{fig:2d-ecg}
\end{figure}

\vspace{5pt}
\textbf{3D test case.} 
We here consider a small slab of tissue: 
a cubic domain with one centimetre width ($\oo=[0,1]^3$).
A series of 5 meshes $(\Tau_n)_{n=1\dots 5}$ has been considered, from 500 to 1 250 000 vertices (see Tab. \ref{tab:res-cost2} for exact figures).
The mesh size being divided by 2 from $\Tau_n$ to $\Tau_{n+1}$, the time stepping $\Delt$ also is divided by 2 and ranges from 0.2 to 0.0125 ms from the coarsest to the finest mesh.
The heart is here considered as isolated: no torso $\TT$ is involved as described in Sec. \ref{subsec:isol-case}. 
The cardiac tissue anisotropy is set to be of orthotropic type, as defined in \cite{colli-taccardi-05}. 
Muscular fibres are horizontal and independent of $x$ and $y$. The fibre directions linearly rotate from $+\pi/4$ to $-\pi/4$ as $z$ goes from 0 to 1. 
Orthotropic anisotropy represents the physiologically observed rotation of the cardiac fibres from $+\pi/4$ to $-\pi/4$ from the endo-cardium to the epi-cardium.
\\
A depolarisation potential wave is simulated by applying a stimulation current at the centre of the domain during 1 ms.
\\
The spreading of transmembrane depolarisation wave is depicted on Fig. \ref{fig:3d-dep}. 
Activation time are here represented for three slices of the domain $\Omega=[0,1]^3$: $z=0$, $z=0.5$ and $z=1$. Each slice  corresponds to the endo-cardium, middle wall and epi-cardium respectively.
The fibre angle with $\mathbf{e}_x$ is clearly visible on each slice:
$+\pi/4$ for $z=0$ (left), 0 for $z=0.5$ (middle) and $-\pi/4$ for $z=1$ (right).
\begin{figure}[!ht]
  \centering
  \begin{tabular}{ccc}
      \includegraphics[width=150pt]{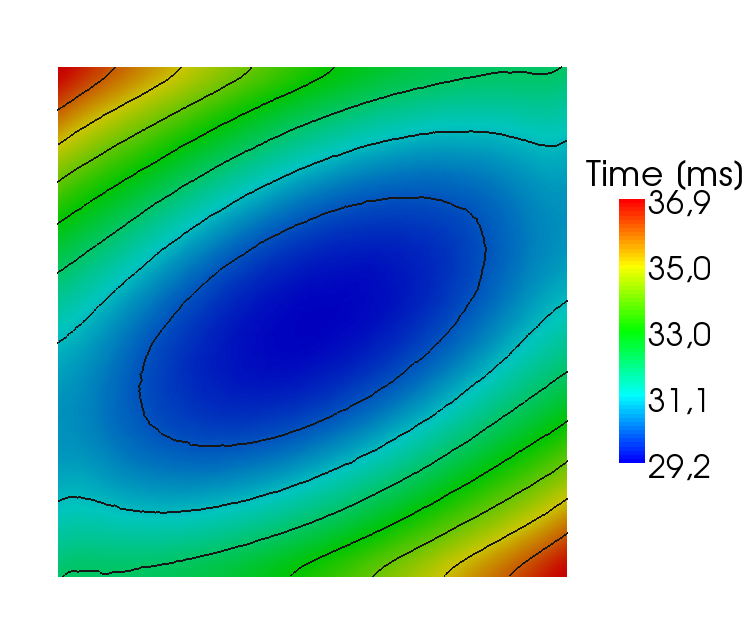}
      &
      \includegraphics[width=150pt]{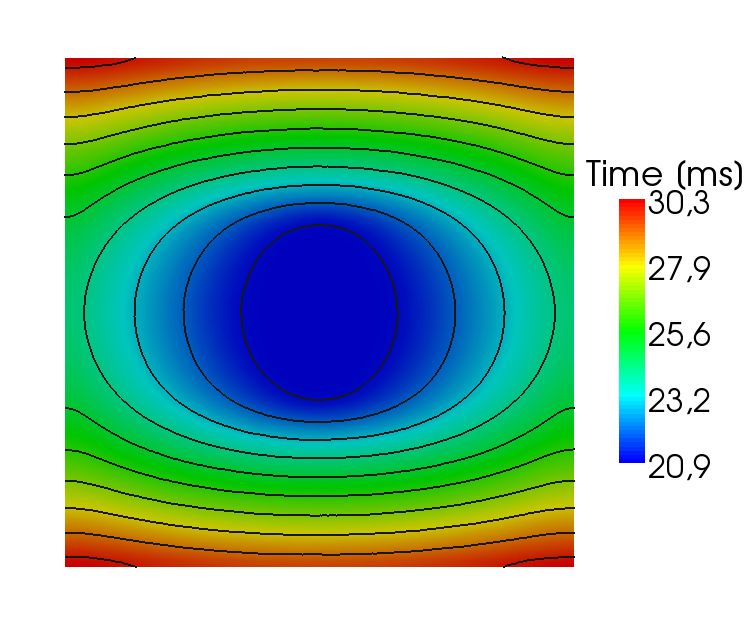}
      &
      \includegraphics[width=150pt]{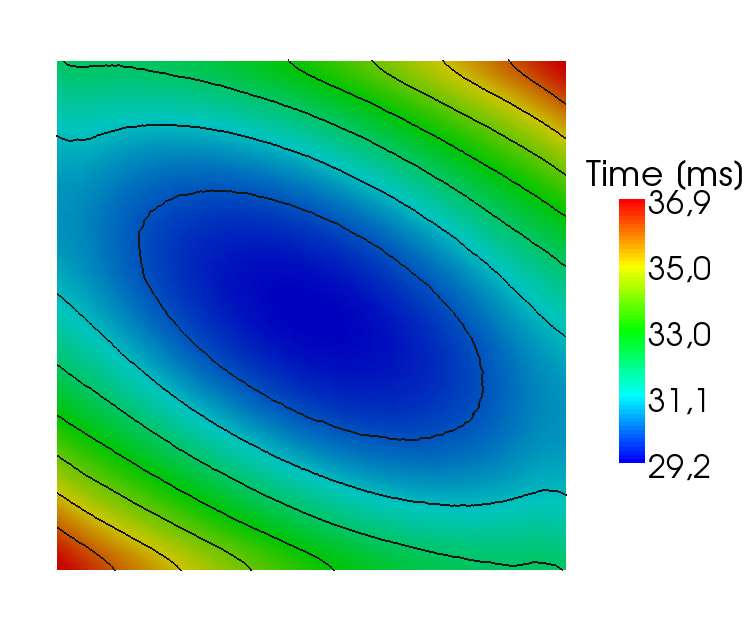}
  \end{tabular}
  \caption{3D simulation. Activation times for three slices of the domain $\Omega=[0,1]^3$: $z=0$, $z=0.5$ and $z=1$ from left to right. 
    Isolines (in black) are separated by 1 ms.
  }
  \label{fig:3d-dep}
\end{figure}

\subsection{Results}
\label{sec:res} 
All figures and tables reported here have been obtained fixing a tolerance of $10^{-6}$ for the system \eqref{eq:system} inversion; the residual being defined as $\Vert \sysmat X -Y\Vert /\Vert  Y\Vert $ in Euclidian vector norm.
The hierarchical Cholesky decompositions for $P_1$ and $P_K$ have been built for various values of the accuracy parameter $\epsilon$
introduced in Sec. \ref{sec:prec3}.
All computations were ran on a clustered platform with processor cores of type AMD Opteron, 2.3 GHz.

\begin{table}[!ht]
  \flushleft
  \hspace*{-35pt}
  \begin{tabular}{cc}
    \begin{tabular}{|c|c||c|c|c|}
      \hline     
      $n$ & DOF($n$) & 
      \multicolumn{3}{|c|}{Iter($n$)}
      \\ \cline{3-5}
      & &
      $\epsilon=10^{-2}$ & 
      $\epsilon=10^{-3}$ & 
      $\epsilon=10^{-4}$
      \\[1pt]
      \hline
      1 & 143 053    & 3.19  &3.00&3.00
      \\
      2 & 344 408    & 3.82  &3.00&3.00
      \\
      3 & 684 112    & 4.00  &3.00&3.00
      \\
      4 & 1 257 312  & 4.54  &3.00&3.00
      \\[1pt]
      \hline
      \multicolumn{5}{c}{}
    \end{tabular}
    &
    \begin{tabular}{|c|c||c|c|c|}
      \hline
      
      $n$ & DOF($n$) & 
      \multicolumn{3}{|c|}{Iter($n$)}
      \\ \cline{3-5}
      & &
      $\epsilon=10^{-1}$ & 
      $\epsilon=10^{-2}$ & 
      $\epsilon=10^{-3}$
      \\[1pt]
      \hline
      1 & 497       &  2.40  &  2.00  & 2.00 
      \\
      2 & 3 220     &  4.03  &  2.79  & 2.76
      \\
      3 & 22 256    &  5.14  &  3.00  & 3.00
      \\
      4 & 162 981   &  7.43  &  3.24  & 3.00
      \\
      5 & 1 253 910 &  11.20 &  3.96  & 2.00
      \\[1pt]
      \hline
    \end{tabular}
    \\
    (a) 2D case &    (b) 3D case 
    \\ $~$
  \end{tabular}
  \caption{Average number of iterations for one system inversion.}
  \label{tab:res-cost2}
\end{table}

\vspace{5pt}
\textbf{Number of iterations.}
We first investigate the cost for system \eqref{eq:system} during the depolarisation sequence in terms of number of iterations Iter($n$) for the PCG algorithm.
As already developed in this section preamble, the global cost  theoretically is in $O(\text{Iter}(n)\text{DOF}(n)\log(\text{DOF}(n))^\alpha)$.

The numerical results are reported in Tab. \ref{fig:iter-fig}.
In dimension 2, for $\epsilon=10^{-2}$ Iter($n$) globally is
 multiplied by 1.18 between the coarsest and the finest meshes when meanwhile the problem size is multiplied by almost 9. For $\epsilon\le 10^{-3}$ Iter($n$) remains constant. In dimension 3 Iter($n$) increases very slowly: for $\epsilon=10^{-2}$ (resp. $10^{-1}$) it is multiplied by 2 (resp. 4.66) when the problem size is multiplied by more than 2 500; for $\epsilon=10^{-2}$ it even decreases.
\begin{figure}[!ht]
  \centering
  \begin{tabular}{ccc}
    \includegraphics[width=220pt]{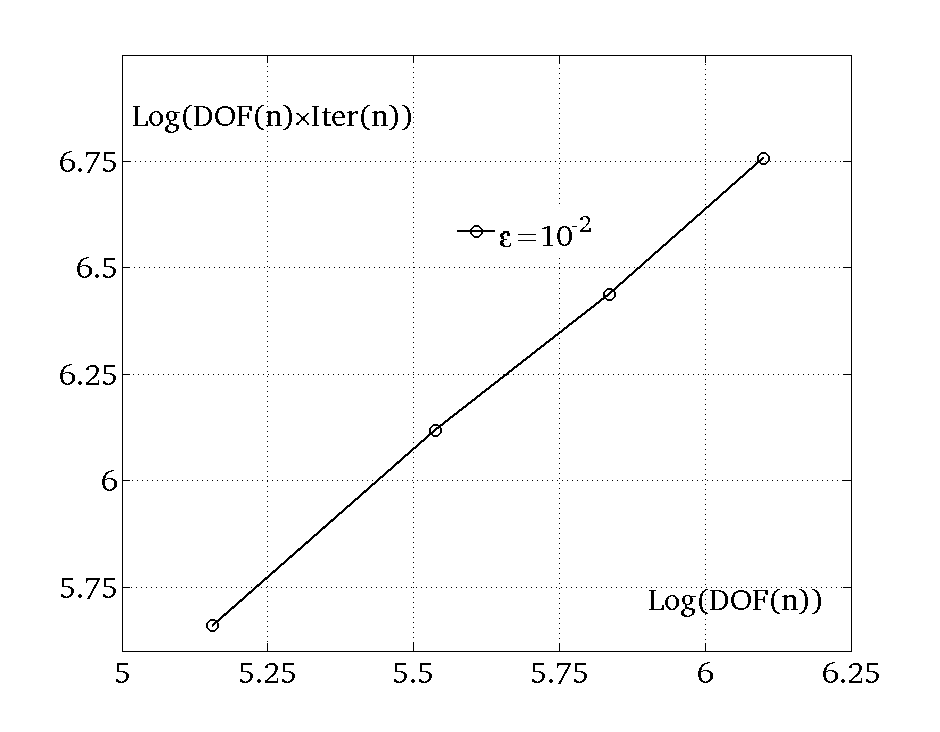} 
    &
    \includegraphics[width=220pt]{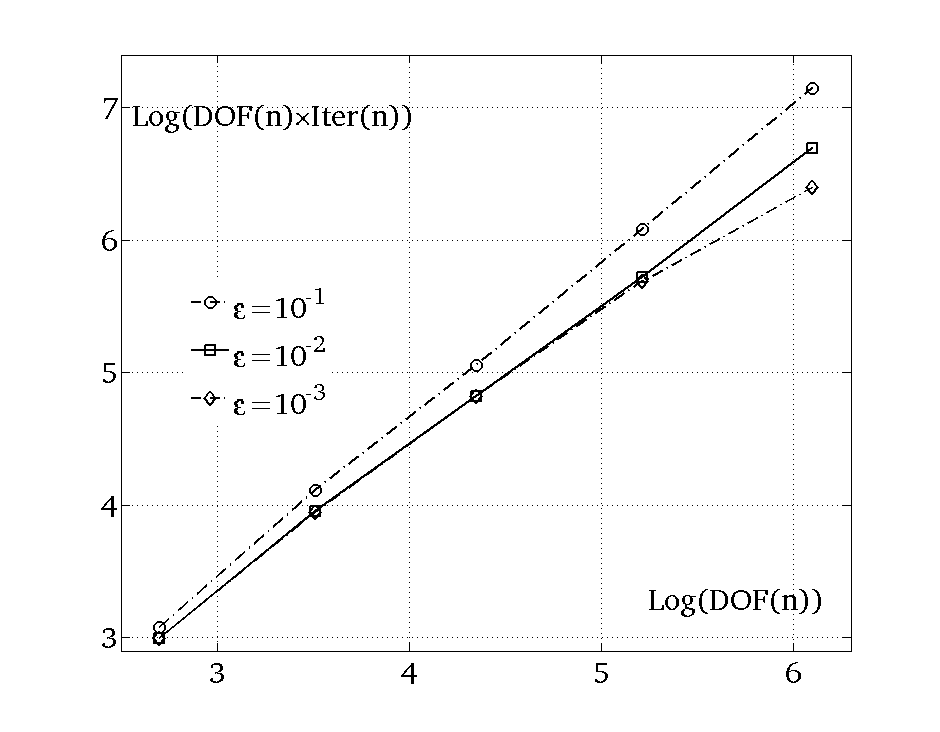} 
    \\
    (a) 2D case &    (b) 3D case 
  \end{tabular}
  \caption{
Plot of DOF($n$)$\times$Iter($n$) as a function of DOF($n$) in (decimal) Log/Log Scale. Left: 2D case for $\epsilon=10^{-2}$. Right: 3D case for the three values of $\epsilon=10^{-2},~10^{-3}$ and 10$^{-3}$.
}
  \label{fig:iter-fig}
\end{figure}

The very slow variation of Iter($n$) with DOF($n$) (when it is not constant) appears in good agreement with a $O(\log(\text{DOF}(n))^\beta)$ assumption ensuring almost  linear complexity of the preconditioning global cost.
It is unfortunately not possible to numerically estimate $\beta$ from these results since $\log(\log(\text{DOF}(n)))$ has a too small range of variation.
To have a deeper insight on the behaviour of Iter($n$) when it does not remain constant we instead consider the cost indicator $\text{DOF}(n)\times \text{Iter}(n)$. An almost linear behaviour of this indicator is expected. It has been represented as a function of $\text{DOF}(n)$ in decimal logarithmic scale on Fig. \ref{fig:iter-fig}.
In dimension 2 the curve has a global estimated slope of 1.15 using a linear least square best approximation. In dimension 3 the slopes  have been estimated to 1.19, 1.07 and 1.0 for $\epsilon=10^{-1}$, $10^{-2}$ and $10^{-3}$ respectively.
Again, these results are in good agreement with the almost linear complexity assumption on the preconditioning.

\begin{table}[!ht]
  \centering
  \begin{tabular}{cc}
    \begin{tabular}{|c|c||c|c|c|}
      \hline
      $n$ &
      DOF$(n)$ & 
      \multicolumn{3}{|c|}{CPU$(n)$}
      \\ \cline{3-5}
      & &
      $\epsilon=10^{-2}$ & 
      $\epsilon=10^{-3}$ & 
      $\epsilon=10^{-4}$
      \\[1pt]
      \hline
      1& 143 053    & 1.73   & 1.57   & 1.78 
      \\
      2& 344 408    & 6.32   & 4.34   & 4.42 
      \\
      3 &684 112    & 10.49  & 8.75   & 8.39
      \\
      4 &1 257 312  & 23.96  & 17.04  & 13.46 
      \\[1pt]
      \hline
      \multicolumn{5}{c}{}
    \end{tabular}
    &
    \begin{tabular}{|c||c|c|c|}
      \hline
      
      $n$ & 
      \multicolumn{3}{|c|}{$r_n$}
      \\ \cline{2-4}
      & 
      $\epsilon=10^{-2}$ & 
      $\epsilon=10^{-3}$ & 
      $\epsilon=10^{-4}$
      \\[1pt]
      \hline
      2   &1.47  & 1.16& 1.04
      \\
      3   &0.74  & 1.02& 0.93
      \\
      4   &1.36  & 1.09& 0.78
      \\[1pt]
      \hline 
      \multicolumn{4}{c}{}
      \\
      \multicolumn{4}{c}{}
    \end{tabular}
  \end{tabular}
  \caption{CPU Time, 2D case. Left: averaged CPU time in seconds for one system inversion. 
    Right: logarithmic growth of CPU($n$)with respect to DOF($n$).
  }
  \label{tab:cpu-2d}
\end{table}
\begin{table}[!ht]
  \centering

  \begin{tabular}{ccc}

    \begin{tabular}{|c|c||c|c|c|}
      \hline
      
      $n$ & DOF$(n)$ & 
      \multicolumn{3}{|c|}{CPU($n$)}
      \\ \cline{3-5}
      & &
      $\epsilon=10^{-1}$ & 
      $\epsilon=10^{-2}$ & 
      $\epsilon=10^{-3}$
      \\[1pt]
      \hline
      1& 497       & 2.0 $10^{-3}$ & 1.7 $10^{-3}$ & 1.8 $10^{-3}$
      \\
      2& 3 220     & 5.1 $10^{-2}$ & 4.1 $10^{-2}$ & 4.2 $10^{-2}$
      \\
      3& 22 256    & 6.9 $10^{-1}$ & 4.4 $10^{-1}$ & 4.9 $10^{-1}$
      \\
      4& 162 981   & 8.6           & 4.6           & 5.5
      \\
      5& 1 253 910 & 102.96        & 59.8          & 32.2
      \\[1pt]
      \hline
      \multicolumn{5}{c}{}
    \end{tabular}
    &
    \begin{tabular}{|c||c|c|c|}
      \hline
      $n$ & 
      \multicolumn{3}{|c|}{$r_n$}
      \\ \cline{2-4}
      & 
      $\epsilon=10^{-1}$ & 
      $\epsilon=10^{-2}$ & 
      $\epsilon=10^{-3}$
      \\[1pt]
      \hline
      2   &  1.75  &  1.70  &  1.70
      \\
      3   &  1.34  &  1.22  &  1.27
      \\
      4   &  1.27  &  1.19  &  1.21
      \\
      5   &  1.22  &  1.25  &  0.86
      \\[1pt]
      \hline
      \multicolumn{4}{c}{}
      \\
      \multicolumn{4}{c}{}
    \end{tabular}
  \end{tabular}
  \caption{CPU Time, 3D case. Left: averaged CPU time in seconds for one system inversion. 
    Right: logarithmic growth of CPU($n$)with respect to DOF($n$).}
  \label{tab:cpu-3d}
\end{table}

\vspace{5pt}
\textbf{CPU time consumption.}
The cost CPU($n$) is reported in Tab. \ref{tab:cpu-2d} (resp. Tab. \ref{tab:cpu-3d}) in dimension 2 (resp. 3) together with the logarithmic growth rate $r_m$ of CPU($n$) with respect to DOF($n$) defined in \eqref{eq:growth-rate}. As for the iteration number, the behaviour of CPU($n$) is clearer for the smallest values of $\epsilon$. For $\epsilon\le 10^{-3}$ (resp. $\epsilon\le 10^{-2}$) in dimension 2 (resp. 3), $r_n$ decreases with $n$ and goes to 1 or even below 1.

The data in Tabs. \ref{tab:cpu-2d} and \ref{tab:cpu-3d} have been plotted on Fig. \ref{fig:res-cout}. The curve slopes have been estimated using a least square best linear approximation. 
In dimension 2 the slopes  are of 1.17, 1.09 and 0.94 for $\epsilon=10^{-2}$, $10^{-3}$ and  $10^{-4}$ respectively.
In dimension 3 they  are of 1.27, 1.21 and 1.12
for $\epsilon=10^{-1}$, $10^{-2}$ and  $10^{-3}$ respectively (and neglecting the first data point).
\\
Firstly, since $r_n$ roughly decreases (starting with rates higher than 1.7 in dimension 3), these computed slopes indeed are upper-bounds on the complexity.
Secondly CPU time is not a fully reliable cost measurement: because of cache effects memory-access differences for large-scale problems and because of the cluster load.
For these two reasons we conclude that these CPU data are in good agreement with an almost linear complexity of the preconditioned system inversion, confirming the study of Iter($n$). 
\begin{figure}[!ht]
  \centering
  \begin{tabular}{ccc}
    \includegraphics[width=220pt]{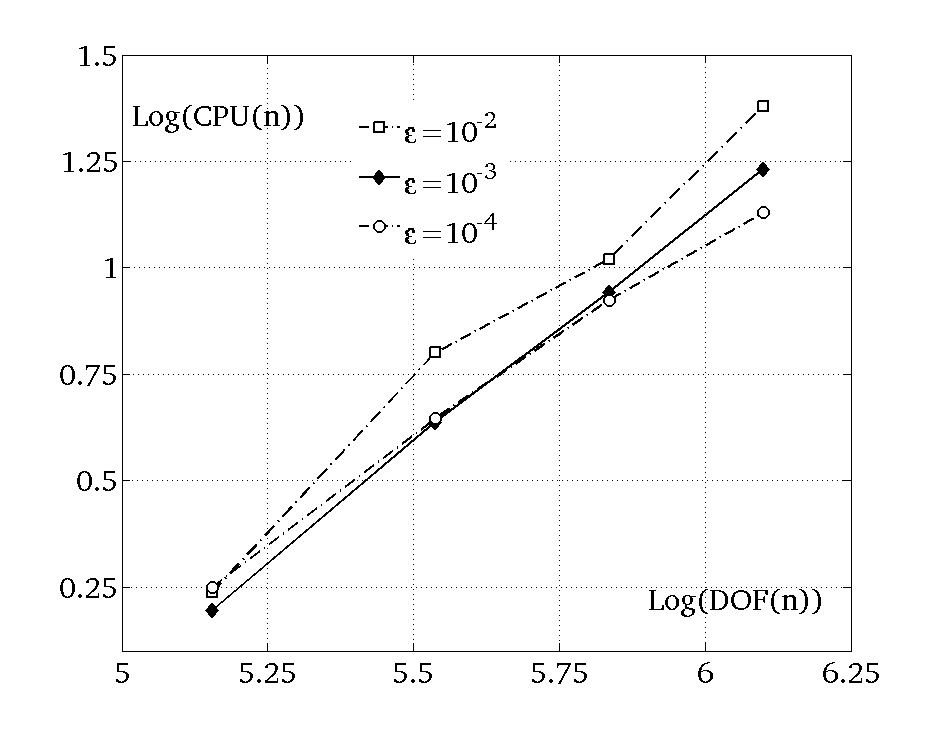}
    &
    \includegraphics[width=220pt]{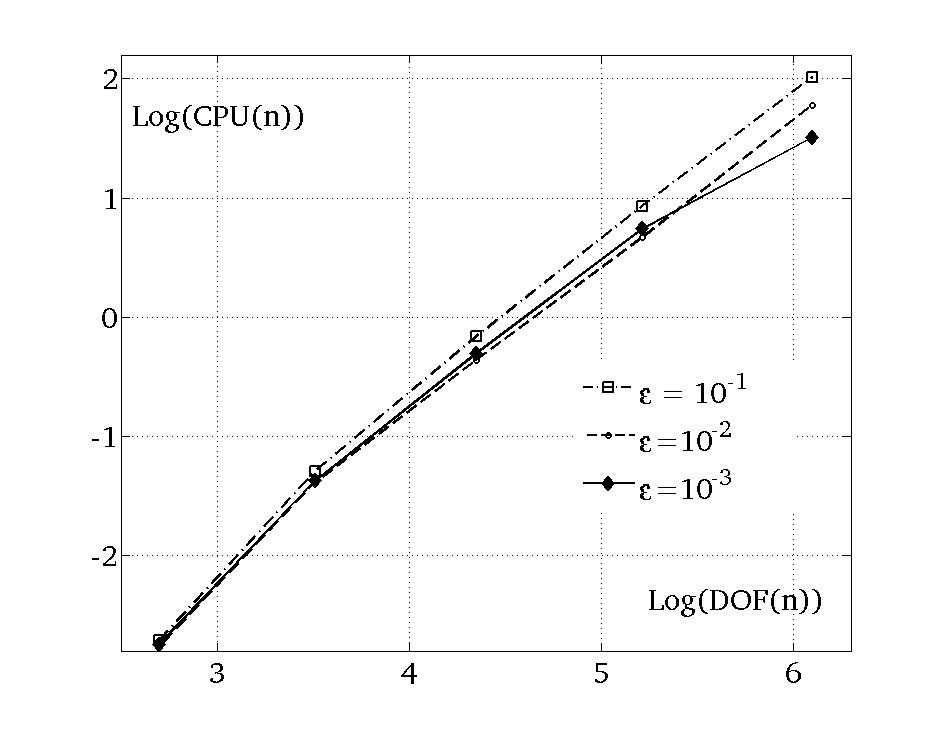}
    \\
    (a) 2D case &    (b) 3D case 
  \end{tabular}
  \caption{Cost of one inversion of $\sysmat X = Y$ in terms of CPU Time as a function of the problem size in (decimal) Log/Log scale.  
  }
  \label{fig:res-cout}
\end{figure}

\vspace{5pt}
\textbf{PCG convergence rate.} The convergence rate of the residual towards 0 for the preconditioned conjugate gradient algorithm has been measured in dimension 2 and 3 for the accuracy parameter set to $\epsilon=10^{-3}$.
The (decimal) logarithm of the residual has been plotted as a function of the iteration number on Fig. \ref{fig:pcg-conv} for the four considered meshes in dimension 2 and for 3 meshes in dimension 3.
Due to the very small number of iterations needed, this convergence rate obviously is quite large. 
\\
In dimension 3, for the finest mesh $\Tau_5$ with 1 250 000 vertices, the residual is divided by more than 150 at step one and by more than 75 at step 2. For the two other meshes, each PCG iteration divides the residual by at least 100.
\\
In dimension 2, for all four meshes $\log(\text{residual})$ displays the same global slope with respect to the number of iterations that is equal to 1.6. Globally the residual is divided by 40 at each time step.
More precisely the residual is usually divided by 100 at the first step, by 30 at the second one and by 20 at the third one.
\begin{figure}[!ht]
  \centering
  \begin{tabular}{ccc}
    \includegraphics[width=220pt]{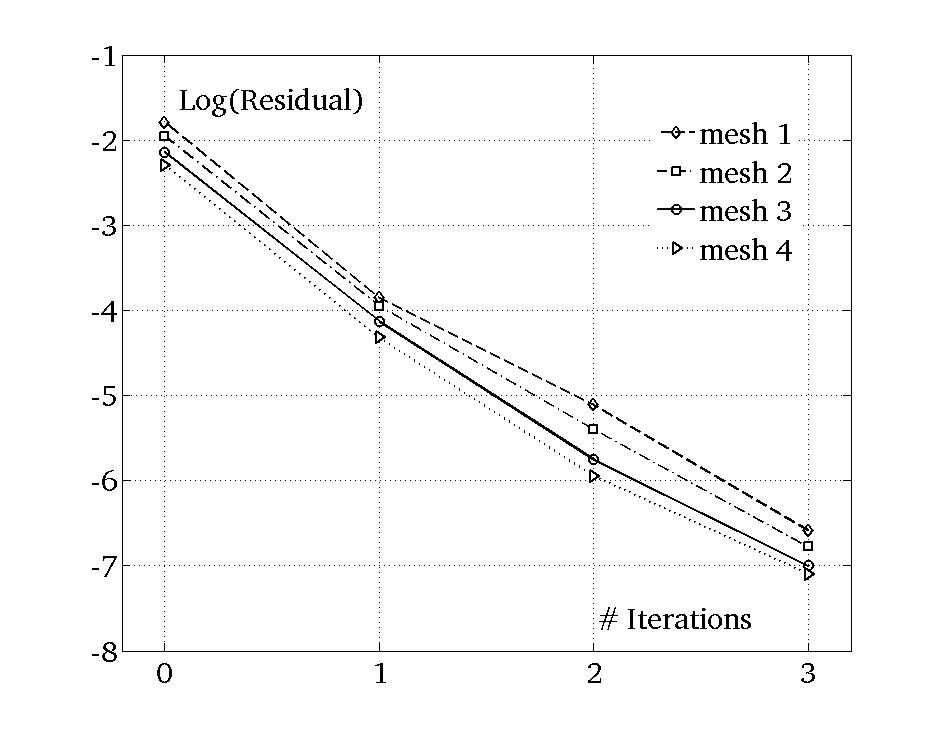} 
    &
    \includegraphics[width=220pt]{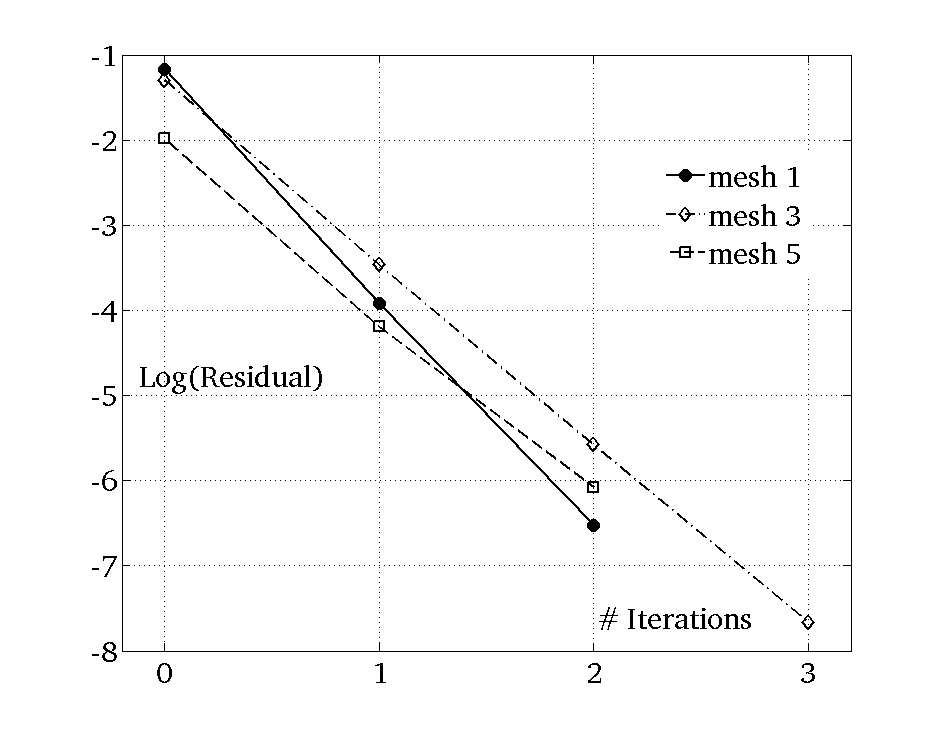} 
    \\
    (a) 2D case &    (b) 3D case 
  \end{tabular}
  \caption{PCG convergence rate. Convergence of the residual of the preconditioned system \eqref{eq:system} as a function of the number of iterations. 
On both the 2D and the 3D cases, the preconditioner is set with $\epsilon=10^{-3}$. Left, 2D case: convergence is shown for each of the four 2d  meshes. Right, 3D case:  convergence is depicted for the coarsest mesh (mesh 1), for the finest mesh (mesh 5) and on the intermediate mesh 3.
  }
  \label{fig:pcg-conv}
\end{figure}

\vspace{5pt}
\textbf{Cost calibration and profiling.}
Neither the CPU time nor the number of iterations actually  provides an absolute evaluation for the preconditioning cost in the following sense. CPU time measurements are device dependent and the iteration number does not take into account the cost for the inversion of $P_1$ and $P_K$ that may be large. 
These indicators are relevant and sufficient to evaluate the asymptotic complexity with DOF($n$) but do not allow practical comparison with other techniques.

To address this question we proceed as follows. Firstly we consider the complete algorithm profiling: we measure the amount of  time spent on each task (RHS computation, system inversion, normalisation...) at each time step and average these durations along the depolarisation sequence.
Secondly we compare the amount of time inside the PCG algorithm spent on the two predominant operations $X\mapsto P_\sysmat^{-1} X$  and $X\mapsto \sysmat X$.
The ratio between these two times provides a calibration of the preconditioner $P_\sysmat$ inversion cost in terms of matrix vector multiplication by $\sysmat$, which last operation has a fully established operational  cost.
\\
We point out that this ratio makes sense because of the almost linear complexity with DOF($n$). Practically it varies sufficiently slowly with DOF($n$) to derive a typical ratio for practically used problem size.
\\
In dimension 2 (resp. 3), these typical figures are as follows:
\begin{itemize}
\item [-] 70\% (resp. 85 \%) of the whole computational effort is dedicated on the system \eqref{eq:system} inversion,
\item [-] each operation $X\mapsto P_\sysmat^{-1} X$ has cost 15 (resp. 25) matrix-vector multiplication by $\sysmat$,
\item [-] considering an average number of iteration equal to 3, inverting $X\mapsto \sysmat^{-1}X$ has the same cost has 50  (resp. 80) matrix-vector multiplication by $\sysmat$.
\end{itemize}
\subsection{Conclusion}
\label{sec:conc}
We introduced in this paper a new preconditioning for the bidomain model based on an algebraic block-$LU$ decomposition of its system matrix $\sysmat$ and a heuristic approximation.
The complexity for solving the preconditioned system $\sysmat X=Y$ with respect to the matrix size has been numerically analysed using both a 2D and a 3D test case and a hierarchical Cholesky preconditioning.
This complexity has been numerically showed to be almost linear; which is optimal in this context (see discussion in Sec. \ref{sec:prec3}).
\\[3pt]\indent
We firstly would like to recall that
the notion of complexity is not sufficient to compare algorithms in practise.
The only certainty  is that the resolution strategy presented here will \textit{asymptotically} become more efficient than a second algorithm with worst complexity (as the problem size goes to infinity).
Being fixed a problem, the second algorithm might be more efficient.
The calibration and profiling provided in Sec. \ref{sec:res} might however help towards such comparisons and especially with the preconditioning developed in \cite{nobile-precond-2009}.
Firstly the data given in this paper do not indicate almost linear complexity. Precisely, CPU time data rather fit a complexity of 1.4 with the problem size. Despite the limitations on CPU time measurements we already mentioned, it is likely that this complexity is greater than 1.
Iteration numbers also are reported (on a test case quite close to the 3D test case here on the mesh $\Tau_4$) that are of order 6 with a flexible GMRes. Flexible GMRes performs $m$ matrix-vector multiplications and preconditioner inversions per iteration with $m$ the restart number, typically of order 25. This would mean 150 matrix-vector multiplications and preconditioner inversions.
Each preconditioner inversion itself uses an $iLU(0)$ PCG: thus one  matrix-vector multiplication and one $iLU(0)$ inversion
 per iteration.
Even assuming a fast convergence of the PCG in a few steps, this may lead to a calibration of the cost in terms of matrix-vector multiplications several times larger than the one we obtained (equal to 80).
The comparison of CPU times on the same case (almost the same processor has been used for the two papers) confirms this option.
\\[3pt]\indent
We eventually would like to underline that almost linear complexity for the resolution of \eqref{eq:system} does not mean almost linear complexity for the resolution of the bidomain model.
Assuming for  simplicity a 
linear dependence for the cost on the number of nodes, this still implies an $h^{-d}$ dependence of the cost on the mesh size $h$ and with $d$ the dimension. Considering the global cost of the simulation and not only the cost of one inversion, this now leads to an $h^{-(d+1)}$ dependence of the cost on the mesh size.
For instance, considering some precision criterion $e$ based on the activation time, that is of order 1 with $h$ as established in \cite{ABKP-2010,PRB1-2010}, the complexity for the bidomain model with respect to $e$ also is of $e^{-3}$ and $e^{-4}$ in dimension 2 and 3 respectively.
\\
Thus a linear dependence of one system inversion cost on the problem size still leads to really heavy global costs for this type of problems.
\bibliographystyle{plain}
\bibliography{biblio}
\end{document}